\documentclass[11pt]{amsart}

\usepackage{amssymb}
\usepackage{mathrsfs}
\usepackage{amsfonts}
\usepackage{latexsym,amsmath,amsthm,amssymb,amsfonts}
\usepackage[usenames]{color}
\usepackage{xspace,colortbl}
\usepackage{graphicx}
\usepackage{tipa}

\newcommand{\be}{\begin{equation}}
\newcommand{\ee}{\end{equation}}
\newcommand{\beq}{\begin{eqnarray}}
\newcommand{\eeq}{\end{eqnarray}}

\newcommand{\uS}{\mathbb{S}^{n}}

\def\R{{\mathfrak R}}

\newtheorem{prop}{Proposition}[section]

\newtheorem{remark}[prop]{Remark}

\def\begeq{\begin{equation}}
\def\endeq{\end{equation}}

\def\R{\mathbb R}
\def\S{\mathbb S}

\def\tr{{\rm tr}}

\def\odot{\setbox0=\hbox{$\bigcirc$}\relax \mathbin {\hbox
to0pt{\raise.5pt\hbox to\wd0{\hfil $\wedge$\hfil}\hss}\box0 }}

\numberwithin{equation} {section}

\numberwithin{equation}{section}
\newtheorem{theorem}{\bf Theorem}[section]
\newtheorem{proposition}[theorem]{\bf Proposition}

\newtheorem{lemma}[theorem]{\bf Lemma}

\newtheorem{corollary}[theorem]{\bf Corollary}

\allowdisplaybreaks

\begin{document}
\title[The capillary $L_p$ dual Minkowski problem]
 {The capillary $L_p$ dual Minkowski problem for $p>q$ in higher dimensions}

\author{ Ya Gao$^{\dagger,\ast}$}

\address{
 $^{\dagger}$School of Mathematics and Statistics, South-Central Minzu University, Wuhan 430074, China. }

\email{Echo-gaoya@outlook.com}

\thanks{$\ast$ Corresponding author}

\date{}
%\maketitle
\begin{abstract}
We prove existence and uniqueness for the capillary $L_p$ dual Minkowski problem for $p>q$ and contact angle $\theta\in (0,\frac{\pi}{2})$ in $\R^{n+1}$, $n\geq 3$. We reduce it to a Monge-Amp\`ere type equation with a Robin boundary condition on the unit spherical cap, by building a new auxiliary function to obtain the $C^2$ estimate for $p>q$ in $\R^{n+1}$ ($n\geq 3$), we then prove that there exists a unique smooth solution that solves this problem provided $\theta\in (0, \frac{\pi}{2})$.
\end{abstract}

\maketitle {\it \small{{\bf Keywords}: $L_p$ dual Minkowski problem, Capillary hypersurfaces, Monge-Amp\`ere equation, Robin boundary condition.}

{{\bf MSC 2020}: Primary 53C65, 35J66. Secondary 53C42, 53C45, 35J60.}}

\section{Introduction}
The classical Minkowski problem is a fundamental problem in the Brunn--Minkowski theory of convex geometry. It asks one to determine a convex body whose surface area measure corresponds to a prescribed spherical Borel measure. This problem has been completely solved under a necessary and sufficient condition by Minkowski \cite{Min97, Min03}, Alexandrov \cite{Ale38, Ale39, Ale56}, Lewy \cite{Lew38}, Nirenberg \cite{Nir53}, Pogorelov \cite{Pog52}, Cheng-Yau \cite{CY76}, and others. 

The classical Brunn--Minkowski theory has been extended by the $L_p$ Brunn--Minkowski theory and the dual Brunn--Minkowski theory, and these theories are now central in convex geometry. The $L_p$-Minkowski problem is a fundamental problem in the $L_p$ Brunn--Minkowski theory and greatly generalizes the classical Minkowski problem. It was introduced by Lutwak \cite{Lut.JDG.38-1993.131, Lut.Adv.118-1996.244} and has been extensively studied since then; see e.g.
\cite{%BHZ.IMRNI.2016.1807,
  BLYZ.JAMS.26-2013.831,
  CLZ.TAMS.371-2019.2623,
  Zhu.Adv.262-2014.909}
for the logarithmic Minkowski problem,
\cite{JLW.JFA.274-2018.826,
  JLZ.CVPDE.55-2016.41,
  Lu.SCM.61-2018.511,
  Lu.JDE.266-2019.4394,
  LW.JDE.254-2013.983,
  Zhu.JDG.101-2015.159}
for the centroaffine Minkowski problem, and
\cite{CW.Adv.205-2006.33,
  HLYZ.DCG.33-2005.699,
  % LYZ.JDG.56-2000.111,
  LYZ.TAMS.356-2004.4359,
  Sta.Adv.167-2002.160}
for other cases of the $L_p$-Minkowski problem. On the other hand, the dual Minkowski problem was first proposed by Huang, Lutwak, Yang and Zhang
in their recent groundbreaking work \cite{HLYZ.Acta.216-2016.325} and then
followed by
\cite{BHP.JDG.109-2018.411,
  %CL.Adv.333-2018.87,
  HP.Adv.323-2018.114,
  HJ.JFA.277-2019.2209,
  %JW.JDE.263-2017.3230,
  LSW.JEMSJ.22-2020.893,
  Zha.CVPDE.56-2017.18,
  Zha.JDG.110-2018.543}.

  Recently, Lutwak, Yang and Zhang introduced the $L_p$ dual Minkowski problem in \cite{LYZ18}, which unifies the classical Minkowski problem, the $L_p$-Minkowski problem and the dual Minkowski problem. For the general $L_p$ dual Minkowski problem, much progress has already been made
\cite{BF.JDE.266-2019.7980,
  CHZ.MA.373-2019.953,
  CCL,
  % CL,
  HLYZ.JDG.110-2018.1,
  HZ.Adv.332-2018.57,
  LLL}. 
  In fact, for a convex body $K\subset \mathbb{R}^{n+1}$ containing the origin in its interior, the $L_p$ dual curvature measure $d\widetilde{C}_{p,q}$\footnote{Sometimes simply referred to as \emph{the $(p,q)$-th dual curvature measure}.} is defined by
  \begin{equation*}
      d\widetilde{C}_{p,q} =\frac{1}{n+1}h_K^{1-p}\left(h_K^2 + |\nabla h_K|^2\right)^{\frac{q-n-1}{2}}\det(\nabla^2 h_K+h_K\delta_{ij})da,
  \end{equation*}
where $h_K$ is the support function of $K$, $\nabla h_K$ and $(h_K)_{ij}$ are the gradient and the Hessian of $h_K$ on the unit sphere $\uS$ with respect to an orthonormal basis respectively, 
$da$ is the standard spherical area measure.\\
{\bf The $L_p$ dual Minkowski problem.} \emph{Given a finite nonzero Borel measure $m$ on $\uS$ and real numbers $p,q$, does there exist a convex hypersurface (as a boundary of a convex body) such that its induced area measure $d\tilde{C}_{p,q}$ equals $m$?}

When the given measure $m$ has a density $f$, the $L_p$ dual Minkowski problem becomes the existence problem for the following Monge-Amp\`ere equation on $\uS$:
$$\det(h_{ij}+ h\delta_{ij} ) = fh^{p-1}(h^2 + |\nabla h|^2)^{\frac{n+1-q}{2}},$$
where $f$ is a given positive smooth function on $\uS$, $\delta_{ij}$ is the Kronecker delta, $\nabla h$ and $(h_{ij})$ are the gradient and the Hessian of $h$ on $\uS$ with respect to an orthonormal basis respectively.

From the above Monge-Amp\`ere equation, when $q=n+1$ the $L_p$ dual Minkowski problem becomes the $L_p$ Minkowski problem, which includes the classical Minkowski problem; when $p=0$, the $L_0$ dual Minkowski problem becomes the dual Minkowski problem; if, in addition, $q=0$, the dual Minkowski problem becomes the Aleksandrov problem.

As the classical Minkowski problem and related problems have developed, analogous boundary problems have naturally emerged. Busemann \cite{Busemann59} considered Minkowski and related problems for convex surfaces with boundaries early on. Oliker \cite{Oli82} introduced and studied a boundary problem equivalent to solving the Monge-Amp\`ere equation of the classical Minkowski problem with homogeneous Dirichlet boundary condition. Other Dirichlet boundary value problems have also been studied in \cite{CW95, Pog78, Sch18, Sch03}. For the analytic theory of Neumann problems for equations of Monge-Amp\`ere type, see \cite{LTU86}. Recently, several works have introduced Robin or Neumann boundary value conditions for convex capillary hypersurfaces.

\subsection{Setup and the problem }

Let $\{E_i\}_{i=1}^{n+1}$ be the standard orthonormal basis of $\R^{n+1}$, $\R^{n+1}_{+}=\{x\in\R^{n+1}| x\cdot E_{n+1}>0\}$ be the upper Euclidean half-space, $\Sigma\subset\overline{\R^{n+1}_{+}}$ be a properly embedded, smooth compact hypersurface with boundary such that
$$int(\Sigma)\subset\R^{n+1}_{+}\qquad and \qquad\partial\Sigma\subset\partial\R^{n+1}_{+}.$$
We call $\Sigma\subset\overline{\R^{n+1}_{+}}$ a {\emph{capillary hypersurface}} if $\Sigma$ intersects $\partial\R^{n+1}_{+}$ at a constant contact angle $\theta\in (0,\pi)$. Let $\nu$ be the unit outward normal, i.e., the Gauss map of $\Sigma$ with respect to the domain $\hat{\Sigma}$, where $\hat{\Sigma}$ is a bounded closed region in $\overline{\R^{n+1}_{+}}$ enclosed by the convex capillary hypersurface $\Sigma$ and $\partial\R^{n+1}_{+}$, and denote $\hat{\partial\Sigma}:=\partial\hat{\Sigma}\backslash\Sigma\subset\partial\R^{n+1}_{+}$. The contact angle $\theta$ is defined by 
$$\cos(\pi-\theta) = \langle \nu , e\rangle,$$
where $e:= -E_{n+1}$, hence $e$ is the unit outward normal of $\partial\R^{n+1}_{+}$. If $\Sigma$ is convex, then the Gauss image $\nu(\Sigma)$ of $\Sigma$ lies in the spherical cap
$$\S^n_{\theta}:=\left\{x\in\uS | \langle x, E_{n+1}\rangle \geq \cos\theta\right\}.$$
Instead of the usual Gauss map $\nu$, we give another map
$$\tilde{\nu}:= T \circ \nu: \Sigma\to C_{\theta},$$
where $C_{\theta}$ is a spherical cap defined by
$$C_{\theta}:= \left\{\xi\in\overline{\R^{n+1}_{+}} |~ |\xi-\cos\theta\cdot e|=1\right\},$$
which is also a capillary hypersurface, and $T: \S^n_{\theta}\to C_{\theta}$ is the translation in the vertical direction defined by $T(z)=z+\cos\theta\cdot e$. The diffeomorphism $\tilde{\nu}$ is called the {\emph{capillary Gauss map}} of $\Sigma$, and thus we can reparametrize $\Sigma$ using its inverse on $C_{\theta}$ (we can refer to \cite[Fig. 1]{MWW25a}). Hence, we also view the usual support function $h$ of $\Sigma$ as a function defined on $C_{\theta}$. 
Denote $\mathcal{K}_{\theta}$ as the set of all capillary convex bodies in $\overline{\R^{n+1}_{+}}$, and $\mathcal{K}^{\circ}_{\theta}$ as the family of capillary convex bodies with the origin as an interior point of the flat part of their boundary.

Recently, Mei, Wang and Weng \cite{MWW25a} introduced a capillary Minkowski problem, which asks for the existence of a strictly convex capillary hypersurface $\Sigma\subset\overline{\R^{n+1}_{+}}$ with prescribed Gauss-Kronecker curvature on a spherical cap $C_{\theta}$. Subsequently, they considered a capillary $L_p$-Minkowski problem for $p\geq 1$ in \cite{MWW25b}. Hu, Ivaki and Scheuer \cite{HIS25} considered the capillary Christoffel-Minkowski problem, while Hu and Ivaki \cite{HI25} solved the even capillary $L_p$-Minkowski problem for the range $-n< p < 1$ and $\theta\in (0, \frac{\pi}{2})$ using an iterative scheme. Wang and Zhu \cite{WZ25} introduced the more general capillary Orlicz-Minkowski problem.

In addition to this, we \cite{GY26v2} consider the $L_p$ dual Minkowski problem for $p>q$ and $q\leq 1$ in $\R^{n+1}$. Due to the selection of the auxiliary function for the $C^2$ estimate, we are unable to eliminate the condition of $q\leq 1$. Recently, Hu and Yang \cite{HY26} refined $C^2$ argument uses the auxiliary function $\mathfrak{Q}=\log\sigma_1+\mathfrak{c}_1 h+\mathfrak{c}_2 |\nabla h|^2$ and prove the capillary $L_p$ dual Minkowski problem in the case $p>q$ without requiring $q\leq 1$ in $\R^3$, but they observe that this test function does not yield the required $C^2$ estimate in dimensions $n\geq 3$. 

A complementary approach uses curvature flow as a tool. From this point of view, Mei, Wang and Weng \cite{MWW25c} studied a Gauss curvature type flow for capillary hypersurfaces, namely the capillary Gauss curvature flow. Hu, Hu and Ivaki \cite{HHI25} obtained the long-time existence and asymptotic behavior of a class of anisotropic capillary Gauss curvature flows, and as applications their flow gives smooth solutions to the capillary even $L_p$ Minkowski problem in the Euclidean half-space and to the capillary $L_p$ Minkowski problem. 

In this paper, we want to study the {\emph{capillary $L_p$ dual Minkowski problem}}, i.e., study the capillary $(p,q)$-th dual curvature measure $d\tilde{C}^c_{p,q}$ for a convex capillary body in $\overline{\mathbb{R}_+^{n+1}}$, which is defined by
$$d\tilde{C}^c_{p,q}:= \frac{1}{n+1}lh^{1-p}\left(h^2 + |\nabla h|^2\right)^{\frac{q-n-1}{2}}\det(\nabla^2 h+h\sigma)d\sigma,$$
where $\nabla h$ and $\nabla^2 h$ are the gradient and the Hessian of $h$ on $C_{\theta}$ with respect to the standard spherical metric $\sigma$ on $C_{\theta}$ respectively, and $l:=\sin^2\theta + \cos\theta\langle\xi,e\rangle$, $\xi\in C_{\theta}$.\\ 
\\
{\bf Capillary $L_p$ dual Minkowski problem.} \emph{Given a positive smooth function $f$ on $C_{\theta}$, does there exist a capillary convex body $\Sigma\in \mathcal{K}^{\circ}_{\theta}$ such that its capillary $(p,q)$-th dual curvature measure $d\tilde{C}^c_{p,q}$ equals $f\,l\,d\sigma$?}

By applying a similar argument to that in \cite[Proposition 2.4]{MWW25a}, the capillary $L_p$ dual Minkowski problem is equivalent to solving the following Neumann boundary value problem for a Monge-Amp\`ere type equation:
\begin{equation}\label{Eq}
\left\{
\begin{aligned}
&\det\left(\nabla^2 h + h\sigma\right)=fh^{p-1}\left(h^2 + |\nabla h|^2\right)^{\frac{n+1-q}{2}}, ~~&&in~~C_{\theta}\\
&\nabla_{\mu} h=\cot\theta\cdot h,  ~~&& on~~\partial C_{\theta}
\end{aligned}
\right.
\end{equation}
where $\mu$ is the unit outward normal of $\partial C_{\theta}\subset C_{\theta}$. Here, if 
$A:=\nabla^2 h + h\sigma >0 ~~in~C_{\theta}$
is a solution of Eq. \eqref{Eq}, then we call it a convex solution.

\subsection{Main results}

We consider the capillary $L_p$ dual Minkowski problem for $p>q$ (without requiring $q\leq 1$) in $\R^{n+1}$, $n\geq 3$, as formulated in Eq. \eqref{Eq}. The case $q=n+1$ is the capillary $L_p$-Minkowski problem; for $p>q=n+1$ it has already been solved in \cite{MWW25b}; for $p>q, q\leq 1$ and $p>q, n=2$ have been solved by \cite{GY26v2} and \cite{HY26} respectively. The main theorem of this paper extends the previous conclusion to $L_p$ dual Minkowski problem for $p>q$ and $n\geq 3$. It stated below.

\begin{theorem}\label{main1.1}
Let $n\geq 3$, $p>q$ and $\theta\in (0, \frac{\pi}{2})$. For any positive smooth function $f$ defined on $C_{\theta}$, there exists a unique positive smooth strictly convex solution $h$ of Eq. \eqref{Eq}. 
\end{theorem}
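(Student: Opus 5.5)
We prove existence by the method of continuity (or a degree argument) and uniqueness by the maximum principle. Both rest on a priori estimates for positive strictly convex solutions of Eq. \eqref{Eq}. Since $p>q$, the right-hand side $fh^{p-1}(h^2+|\nabla h|^2)^{\frac{n+1-q}{2}}$ grows like $h^{p-q+n}$ under scaling $h\mapsto th$, while the left-hand side grows like $t^n$. This strict homogeneity gap gives uniqueness and the $C^0$ bounds.

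\textbf{Uniqueness.} Let $h_1,h_2$ be two solutions and set $w=\log h_1-\log h_2$. The Robin condition $\nabla_\mu h=\cot\theta\, h$ gives $\nabla_\mu w=0$ on $\partial C_\theta$. At an interior maximum point $x_0$ of $w$ we have $\nabla h_1/h_1=\nabla h_2/h_2$ and $\nabla^2 h_1/h_1\le \nabla^2 h_2/h_2$. Hence
\[
\det(A_1)/h_1^n\le\det(A_2)/h_2^n .
\]
The equation then yields $h_1^{p-q}\le h_2^{p-q}$ at $x_0$, so $w\le0$ everywhere. If the maximum lies on $\partial C_\theta$, the Neumann condition $\nabla_\mu w=0$ together with convexity of the cap for $\theta<\pi/2$ lets us use the standard boundary argument from \cite{MWW25a}: we show the tangential and normal second derivatives still compare, or perturb via Hopf's lemma. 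By symmetry $w\equiv0$.

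\textbf{A priori estimates.} The $C^0$ bounds come from the same max/min argument applied to $h$ against constants. At the maximum of $h$ we have $\nabla h=0$ (the boundary case is handled because $\nabla_\mu h=\cot\theta\, h>0$ excludes a boundary maximum on the side we need, or we use the auxiliary function $h/\ell$ with $\ell=\sin^2\theta+\cos\theta\langle\xi,e\rangle$, which satisfies the homogeneous Robin condition). This gives
\[
h^{n}\ge \det A = f h^{p-1}h^{n+1-q},
\]
hence $h^{p-q}\le 1/\min f$; the minimum is symmetric. The gradient bound follows from convexity, since $|\nabla h|^2+h^2\le \max (h^2)$ by the usual support-function argument adapted to capillary bodies as in \cite{MWW25a}. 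Higher-order estimates follow from Lieberman--Trudinger/Urbas oblique Monge--Ampère theory once $C^2$ is known.

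\textbf{$C^2$ estimate (main obstacle).} We need an upper bound on the largest eigenvalue of $A$, or on $\sigma_1(A)=\Delta h+nh$. Following the abstract, we introduce a new auxiliary function. We would try
\[
\mathfrak{Q}=\log\lambda_{\max}(A)\ \text{(or }\log\sigma_1)+\beta\log\ell+\mathfrak{c}_1 h+\mathfrak{c}_2|\nabla h|^2 - \mathfrak{c}_3 \log h .
\]
The weight in $\ell$ is chosen so that the boundary Neumann derivative of $\mathfrak{Q}$ has a favorable sign using $\theta<\pi/2$, as in \cite{MWW25b}. The trouble for $n\ge3$ in \cite{HY26} comes from the term $(h^2+|\nabla h|^2)^{\frac{n+1-q}{2}}$: its second derivatives produce $\sum_i A_{ii}^{-1}\cdots$ terms of size $\lambda_{\max}$ that $\mathfrak{c}_2|\nabla h|^2$ alone does not absorb. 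We would instead include a term $\mathfrak{c}_2\log(h^2+|\nabla h|^2)$, whose linearization $F^{ij}\nabla_{ij}$ yields $\frac{2}{\rho^2}\sum_k h_k^2 F^{ii}A_{ii}^2$-type good terms. Using $\lambda_{\max}$ rather than $\sigma_1$, with the concavity of $\log\det$ and the standard third-order term handling, we hope these dominate the bad terms of order $\lambda_{\max}$ for any $q$.

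The boundary case is treated separately. We would show that at a boundary maximum, $\nabla_\mu\mathfrak{Q}<0$ for a suitable choice of $\beta$, using $\nabla_\mu h=\cot\theta\, h$ and the second fundamental form of $\partial C_\theta$. This reduces the problem to the interior case. Finally, for existence we connect $f$ to a constant $f_0$ via $f_t=(1-t)f_0+tf$. For $f_0$ suitably chosen, $h=c\,\ell$ solves the equation, because $\ell$ is the support function of $C_\theta$. Openness follows from the invertibility of the linearized operator with oblique boundary condition, which is injective by the same maximum-principle argument used for uniqueness since $p>q$. Closedness follows from the estimates, which completes the proof of Theorem \ref{main1.1}.
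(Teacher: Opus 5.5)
\textbf{The gap is the $C^2$ estimate.} This is the only genuinely new ingredient of the theorem, and your proposal does not provide it. You write down a candidate test function and ``hope'' the good terms dominate, but you derive neither the interior nor the boundary inequality.

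You also locate the obstruction in the wrong place. In the interior, the second derivatives of $\tilde f=\log\det A=(p-1)\log h+\frac{n+1-q}{2}\log P+\log f$, with $P=h^2+|\nabla h|^2$, enter through $\frac{1}{\sigma_1}\sum_i\tilde f_{ii}$. Their $\lambda_i^2$-parts are bounded by $C\sigma_1(A)$. Their third-order parts are removed by the critical-point equation: the $\sum_k h_k^2\lambda_k$ terms cancel exactly against those in $2b\sum_k h_k\tilde f_k$. Then $b|\nabla h|^2$ with $b$ large and $ah$ with $a\ll0$ absorb everything, for every $q$.

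The real difficulty is at $\partial C_\theta$. Use $h_{\alpha n}=0$ and $A_{\alpha\alpha,n}=\cot\theta(\lambda_n-\lambda_\alpha)$, and multiply the normal derivative by $\sigma_1/\lambda_n$. Then the term $b|\nabla h|^2$ contributes $2b\cot\theta\,h(\lambda_n-h)\sigma_1/\lambda_n$. This has the wrong sign and size $b\,\sigma_1$ once $\lambda_n>h$. The only good term from $\log\sigma_1$ is $-\cot\theta\sum_{\alpha<n}(\lambda_n/\lambda_\alpha+\lambda_\alpha/\lambda_n-2)$. For $n=2$ the pinching of $\lambda_1\lambda_2=\det A$ makes this quadratic in $\sigma_1$, but in higher dimensions it need not beat $b\,\sigma_1$.

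Your additions do not cure this. Since $(\log\ell)_\mu=(\log h)_\mu=\cot\theta$, your terms $\beta\log\ell$ and $-\mathfrak c_3\log h$ only add $O(\sigma_1/\lambda_n)$, exactly like $ah$, which is useless when $\lambda_n$ is large. The term $\mathfrak c_2\log P$ has normal derivative $2\mathfrak c_2\cot\theta\,h\lambda_n/P>0$. This is worse than $b|\nabla h|^2$, since it never becomes favourable. In the interior, its $h_i^2$-weighted part $-4\sum_i h_i^2\lambda_i/P^2$ comes with a minus sign, so the good term you expect is not there.

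The missing idea is the extra term $\log\sigma_{n-1}(A)=\log\det A+\log S$, with $S=\sum_i\lambda_i^{-1}$. At the boundary it contributes $-\cot\theta\,\frac{\sigma_1}{S}\sum_{\alpha<n}(\lambda_\alpha^{-1}-\lambda_n^{-1})^2$. This is at most $-\frac{\cot\theta}{4(n-1)}\sigma_1S$ whenever $\lambda_n\ge m/2$ and $S\ge4n/m$. That bounds $S$, and hence every eigenvalue through $\lambda_i\le D_+S^{n-1}$. The regime $\lambda_n<m/2$ is handled by $a\ll0$, where the $b$-term is favourable. In the interior the new term costs only a $-S$, which cancels the $+S$ produced by $\log\sigma_1$, plus the nonnegative third-order form $\mathcal G_S$.

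\textbf{Smaller points.}

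\emph{Gradient bound.} Your bound $h^2+|\nabla h|^2\le\max h^2$ is false for capillary bodies. For $h=\ell$ (the cap $C_\theta$ itself), one has $h^2+|\nabla h|^2=\sin^2\theta$ on $\partial C_\theta$, while $\max h^2=\sin^4\theta$. The correct bound is $(1+\cot^2\theta)\max h^2$. It follows at a boundary maximum of $P$ from $h_\alpha=0$ (via $h_{\alpha n}=0$) and $h_n=\cot\theta\,h$.

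\emph{$C^0$ upper bound.} The condition $\nabla_\mu h=\cot\theta\,h>0$ excludes a boundary minimum of $h$, not a maximum. So your alternative $u=h/\ell$ with $\nabla_\mu u=0$, which is the paper's route, is what is actually needed.

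\emph{Continuity path.} For constant $f_0$, $h=c\,\ell$ is not a solution, because $\ell^{p-1}(\ell^2+|\nabla\ell|^2)^{(n+1-q)/2}$ is not constant. Start instead from $f^{(0)}=\ell^{1-p}(\ell^2+|\nabla\ell|^2)^{(q-n-1)/2}$ with $h=\ell$.

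\emph{Uniqueness.} Your argument is correct and needs neither Hopf's lemma nor convexity of $\partial C_\theta$. At a boundary maximum of $w$, the condition $w_\mu=0$ on $\partial C_\theta$ gives $\nabla w=0$, $w_{\alpha n}=0$ and $w_{nn}\le0$. So $\nabla^2w\le0$, and the pointwise comparison is the same as in the interior. This is slightly more direct than the paper's linearization via concavity of $\log\det$ followed by the Hopf lemma.
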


To prove the main theorem, we establish a priori estimates for solutions to Eq. \eqref{Eq} up to the second derivative, and then apply the continuity method to obtain existence. First, we give uniform positive lower and upper bounds for solutions to prevent Eq. \eqref{Eq} from degenerating.

When $p>q$, we use the capillary support function $u(\xi)$, which satisfies a Monge-Amp\`ere type equation with vanish Neumann boundary condition. The maximum principle gives positive lower and upper bounds for the solutions; see Lemma \ref{C0 estimate}. Then we give a rough $C^1$ estimate in Lemma \ref{c1-1}, based only on convexity and independent of the specific equation. Namely, the gradient of the solution $h$ is bounded by the contact angle $\theta$ and the upper bound of $h$ as follows:
$$\max_{C_{\theta}}|\nabla h|\leq (1+\cot^2\theta)^{1/2}\Vert h\Vert_{C^0 (C_{\theta})},$$
so we have the upper bound of $|\nabla h|$. 

For the $C^2$ estimate, the existing text functions are unable to handle the cases in high dimensions for $p>q$ without $q\leq 1$. Inspired by the auxiliary function $\mathfrak{Q}=\log\sigma_1 +\mathfrak{c}_1h +\mathfrak{c}_2 | \nabla h|^2$ in Hu-Yang's second version on arXiv \cite{HY26}, we consider a new text function as follows
$$Q:=\log\sigma_1(A)+a h+b|\nabla h|^2+\eta\log\sigma_{n-1}(A),$$
where the constant $b>0$, $a<0$, and in the final proof we take $\eta=1$. %%The identity
%%$$\sigma_{n-1}(A)=\det(A)\rt(A^{-1})$$
%%shows why the extra term is useful: under the equation, it detects the higher-dimensional anisotropic collapse in which the determinant is fixed while some tangential eigenvalues become small. The proof of the $C^2$ estimate first records the differential identities for $\log\sigma_{n-1}(A)$, then proves the interior and boundary maximum estimates for this extended $P$, and finally converts the bound for $P$ into a bound for $\sigma_1(A)$.

\begin{remark}
    %%{\bf (a)} The case $q=n+1$ is included in Theorem \ref{main1.1}, but this is the capillary $L_p$-Minkowski problem and is already covered by \cite{MWW25b} for $p>n+1$;\\
    {\bf (a)}
    The range $\theta\in (0,\frac{\pi}{2})$ is used in the global $C^2$ estimate, because it gives the strict convexity of $\partial C_{\theta}\subset C_{\theta}$. This restriction on the range is not necessary in establishing the $C^0$ and $C^1$ estimates for Eq. \eqref{Eq};\\ 
    {\bf (b)}
    When $n=2$, the auxiliary function we have chosen reduces to Hu-Yang's \cite{HY26} text function. There, in this paper, we only prove the case where $n\geq 3$;\\
    {\bf (c)}
In fact, after completing this article, we know that Hu-Yang also has reselected the auxiliary function for $C^2$ estimate (different with text function $Q$ in this paper), it can also solve the capillary $L_p$ dual Minkowski problem for $p>q$ and $n\geq 3$, without relying on the conditions that $q\leq 1$ and $n=2$.

   %% {\bf (b)}
    %%The added $\log\sigma_{n-1}(A)$ term is the mechanism that replaces the former restriction $q\leq 1$ in the interior trace test.
\end{remark}

This paper is organized as follows. In Section 2, we recall the preliminaries needed below. In Section 3, we prove the $C^0$ and gradient estimate for solutions of Eq. \eqref{Eq}. In Section 4, we first present some notations, formulas and Lemmas that are necessary for the curvature estimate, and then prove the global $C^2$ estimate. In Section 5, we prove uniqueness and use the continuity method to establish Theorem \ref{main1.1}.

\section{Preliminaries } \label{se2}
This section is divided into two subsections. In the first subsection, we introduce the notion of capillary $L_p$ dual curvature measure, i.e., the capillary $(p,q)$-th dual curvature measure. 
In the second subsection, we provide some basic properties of capillary convex hypersurfaces.  

\subsection{The capillary $L_p$ dual Minkowski problem}

In this subsection, we recall the $L_p$ dual curvature measures from \cite{LYZ18} and describe their capillary counterpart.

Let $\Sigma$ be a capillary hypersurface in $\overline{\mathbb{R}^{n+1}_+}$. Its support function $h$ is defined by
$$h (y) = \max \{ x\cdot y : x\in \Sigma\}, \quad y\in\mathbb{R}^{n+1}.$$

Suppose that $\Sigma\in \mathcal{K}_{\theta}^{\circ}$. The radial function $\rho$ and radial map $r$ are defined by
\begin{equation*}
    \rho (x) = \max\{\lambda: \lambda x\in \Sigma\}, \qquad x\in \mathbb{R}^{n+1}\backslash\{0\}
\end{equation*}
and
\begin{equation*}
    r: \uS_+\to \Sigma, \qquad r(\gamma)=\rho(\gamma)\gamma \in\Sigma
\end{equation*}
respectively, where $\uS_+$ is the upper sphere. For $\gamma\in\uS_+$, define the \emph{radial Gauss map $\alpha:\uS_+\to \uS_{\theta}$ of $\gamma$} by 
\begin{equation*}
    \alpha(\gamma) = \nu(\rho (\gamma)\gamma)\in \uS_{\theta}
\end{equation*}
and $\alpha^{\ast}: \uS_{\theta} \to \uS_+$ is its reverse map. Similar to the definition of capillary Gauss map, instead of the usual radial Gauss map $\alpha$, we give another map 
\begin{equation*}
    \widetilde{\alpha}:= T\circ \alpha \circ r^{-1}  : \Sigma \to C_{\theta},
\end{equation*}
so its reverse map is $\widetilde{\alpha^{\ast}}: C_{\theta} \to \Sigma$. The map $\widetilde{\alpha}$ is called the \emph{capillary radial Gauss map} of $\Sigma$, and we therefore view the usual radial function $\rho$ of $\Sigma$ as a function on $C_{\theta}$.

For $K,L\in\mathcal{K}^{\circ}_{\theta}$ and $\lambda, \tau\geq 0$, the \emph{Minkowski combination} $\lambda K+ \tau L$ is defined by $\lambda K+ \tau L :=\{\lambda x+ \tau y: x\in K, y\in L\}$ and 
$$h_{\lambda K+ \tau L}:= \lambda h_{K} + \tau h_{L},$$
where $h_K$ and $h_L$ denote the support functions of the convex body $K$ and $L$ respectively.

The \emph{$L_p$ combination}, an extension of the Minkowski combinations studied by Firey in the early 1960s, is defined by
\begin{equation*}
    h^p_{\lambda K+_{p}\tau L }:= \lambda h_{K}^p + \tau h_{L}^p
\end{equation*}
for each $p\geq 1$, $K, L\in\mathcal{K}^{\circ}_{\theta}$ and $\lambda, \tau \geq 0$.

The $(p,q)$-th dual curvature measure was introduced by Lutwak, Yang and Zhang in \cite{LYZ18}. Based on the definition in \cite{LYZ18} and the above introduction, we can define the capillary $(p,q)$-th dual curvature measure of $\Sigma\in \mathcal{K}_{\theta}^{\circ}$ as 
$$ d\widetilde{C}_{p,q}^c =\frac{1}{n+1}lh_{\Sigma}^{1-p}\left(h_{\Sigma}^2 + |\nabla h_{\Sigma}|^2\right)^{\frac{q-n-1}{2}}\det(\nabla^2 h_{\Sigma}+h_{\Sigma}\sigma)d\sigma.$$

The preceding definition gives a more general area measure for a capillary convex body $\Sigma\in \mathcal{K}^{\circ}_{\theta}$. If $q=n+1$, it coincides with the capillary $L_p$-surface area measure, and our result reduces to the case $p>n+1$ treated in \cite{MWW25b}.

\subsection{Basic properties of capillary convex hypersurfaces}
We first recall some basic properties of convex hypersurfaces in $\mathbb{R}^{n+1}$. Let $\Sigma\subset\overline{\mathbb{R}^{n+1}_{+}}$ be a smooth, properly embedded, strictly convex capillary hypersurface as above. We parametrize $\Sigma$ by the inverse capillary Gauss map $X: C_{\theta}\to \Sigma$ given by
$$X(\xi) = \tilde{\nu}^{-1}(\xi) = \nu^{-1}\circ T^{-1}(\xi) = \nu^{-1}(\xi-\cos\theta e), \quad \xi\in C_{\theta}.$$
The usual support function $h$ of $\Sigma$ is given by
$$h(X) : =\langle X, \nu(X)\rangle.$$
The support function $h$ is equivalently defined by the unique decomposition
$$X = h(X)\nu (X) + s, \qquad s\in T_{X}\Sigma.$$
Now, we define the capillary support function $u$ by the following unique decomposition
\begin{equation}\label{e1}
     X = u(X)\tilde{\nu}(X) + s', \qquad s'\in T_{X}\Sigma.
\end{equation}
It's easy to see that
$$h(X) = u(X)\langle\nu(X), \tilde{\nu}(X)\rangle = u(X)(1+\cos\theta\langle\nu,e\rangle)$$
since $\tilde{\nu} := T\circ \nu = \nu + \cos\theta e$.
Now, let 
$$h(\xi):= \langle X(\xi),\nu(X(\xi))\rangle = \langle X(\xi), T^{-1}(\xi)\rangle = \langle\tilde{\nu}^{-1}(\xi), \xi-\cos\theta e\rangle,$$
referring to \cite[Proposition 2.4]{MWW25a}, it satisfies 
$$\nabla_{\mu}h = \cot\theta h, \qquad on~ \partial C_{\theta},$$
and 
$$u(\xi) = \frac{h(\xi)}{|\xi|^2 - \cos\theta\langle \xi,e\rangle} = \frac{h(\xi)}{\sin^2\theta + \cos\theta\langle \xi,e\rangle} = \frac{h(\xi)}{l(\xi)},$$
due to $|\xi - \cos\theta e|^2 =1$.

Now we compute the geometric quantities of $X$ in terms of $h$ and $u$. Let $\{e_i\}_{i=1}^n$ be a local orthonormal frame on $C_{\theta}$ such that along the $\partial C_{\theta}$, $e_n = \mu$ is the unit outward normal of $\partial C_{\theta}\subset C_{\theta}$. Together with $T^{-1}(\xi) = \xi -\cos\theta e$, which is the usual normal of $C_{\theta}$ at $\xi$, it builds on a local orthonormal frame in $\R^{n+1}_{+}$. Hence by the definition of $h$, for $X(\xi)\in\Sigma$,
$$X = \sum_{i=1}^n\langle X, e_i\rangle e_i + \langle X, T^{-1}(\xi)\rangle T^{-1}(\xi) = \sum_{i=1}^n\langle X, e_i\rangle e_i + h(\xi)T^{-1}(\xi).$$
Since $\nabla$ is the standard connection in $C_{\theta}$ with respect to the standard spherical metric $\sigma$, it's easy to see
$$\sum_{i=1}^n \langle X, e_i\rangle e_i = \nabla h(\xi).$$
Hence
$$X(\xi) = \nabla h(\xi) + h(\xi)T^{-1}(\xi).$$
By the direct computation
$$\nabla_{e_j}X = (\nabla_{ij}h + h\delta_{ij})e_i,$$
where $\delta_{ij}$ is the coefficients of the standard metric $\sigma$ on $\uS$. Hence, the second fundamental form $A_{ij}$ of $\Sigma$ is given by
$$A_{ij} = \langle \nabla_{e_j}X, \nabla_{e_i}(\xi-\cos\theta e)\rangle = \langle \nabla_{e_j}X, e_i\rangle = \nabla_{ij}h + h\delta_{ij},$$
and the induced metric $g_{ij}$ of $\Sigma\subset\overline{\R^{n+1}_{+}}$ can be derived by Weingarten's formula
$$g_{ij} = \langle \nabla_{e_i}X, \nabla_{e_j}X\rangle = A_{ik}A_{jl}\delta^{kl}.$$
The Gauss-Kronecker curvature of $\Sigma$ at $X$ is 
$$K_{G}(X(\xi)) = \det (g^{ik}A_{kj}) = \det (\nabla ^2 h(\xi)+ h(\xi)\sigma)^{-1}.$$

Both functions and formula will play a crucial role in our paper. 
Next, we give the equation of $h$ and $u$ on $\partial C_{\theta}$, which will be used in subsequent proof process.

Along the boundary $\partial C_{\theta}$, we choose an orthonormal frame $\{e_{i}\}_{i=1}^n$ with $e_n = \mu$ and $\mu$ is the unit outer normal of $\partial C_{\theta}$. Then 

\begin{proposition}\label{pro}
    The support function $h$ and the capillary support funtion $u=l^{-1}h$ satisfy the following boundary conditions on $\partial C_{\theta}$:
    \begin{equation}\label{pro1}
        h_{kn}=0,
    \end{equation}
    which is equivalent to 
    \begin{equation}\label{pro2}
        u_{kn} = -\cot\theta u_{k},
    \end{equation}
    where $k=1,2,\cdots, n-1$.
\end{proposition}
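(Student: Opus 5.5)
Differentiate the Robin condition $\nabla_\mu h=\cot\theta\, h$ tangentially along $\partial C_\theta$, and use the geometry of $\partial C_\theta$ as a totally umbilic hypersurface of the cap. Then translate the result to $u=l^{-1}h$ by expanding the product rule, using explicit facts about $l$ on $\partial C_\theta$.

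\textbf{Step 1: the condition $h_{kn}=0$.} Along $\partial C_\theta$ we write $\mu=e_n$. The boundary $\partial C_\theta\subset C_\theta$ is a geodesic sphere in the unit sphere. Its second fundamental form is umbilic, so
\[
\nabla_{e_k}e_n=\cot\theta\, e_k, \qquad 1\le k\le n-1.
\]
The sign and the value $\cot\theta$ follow from $\partial C_\theta$ being the sphere of spherical radius $\theta$ about $E_{n+1}$ (after translation by $T^{-1}$). This matches the Robin coefficient, which is the key point. Differentiating $h_n=\cot\theta\, h$ in the tangential direction $e_k$ gives
\[
e_k(h_n)=\cot\theta\, h_k .
\]
On the other hand,
\[
e_k(h_n)=h_{nk}+\langle\nabla h,\nabla_{e_k}e_n\rangle=h_{kn}+\cot\theta\, h_k .
\]
Comparing the two expressions yields $h_{kn}=0$, which is \eqref{pro1}.

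\textbf{Step 2: properties of $l$ on $\partial C_\theta$.} We need $l=\sin^2\theta+\cos\theta\langle\xi,e\rangle$ and its derivatives there.
\begin{itemize}
\item On $\partial C_\theta$ we have $\langle\xi,e\rangle=0$, so $l=\sin^2\theta$.
\item The gradient of $l$ is $\nabla l=\cos\theta\, e^{T}$, where $e^T$ is the tangential projection of $e$ onto $C_\theta$. At the boundary this projection is parallel to $\mu$, so $l_k=0$ for $k\le n-1$.
\item The normal derivative is $l_n=\cos\theta\langle e,\mu\rangle$. Here $\langle e,\mu\rangle=\sin\theta$, up to a sign fixed by the requirement that $u$ satisfies the vanishing Neumann condition $u_n=0$ mentioned in the introduction.
\item Since $l$ is a linear function restricted to the sphere, $\nabla^2 l=-(l-\sin^2\theta)\sigma$ up to the shifted normal. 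In particular $l_{kn}=0$ on the boundary; this can also be obtained by the same tangential-differentiation trick applied to $l_n=\cot\theta\, l$.
\end{itemize}
Consistency of the normal derivative is the check $l_n=\cot\theta\, l=\cot\theta\sin^2\theta=\cos\theta\sin\theta$, which agrees with the third item.

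\textbf{Step 3: transfer to $u$.} Write $h=lu$. Then
\[
h_{kn}=l_{kn}u+l_k u_n+l_n u_k+l\,u_{kn}.
\]
Using $l_{kn}=0$ and $l_k=0$ from Step 2, this reduces to
\[
h_{kn}=\sin\theta\cos\theta\, u_k+\sin^2\theta\, u_{kn}.
\]
Hence $h_{kn}=0$ if and only if $u_{kn}=-\cot\theta\, u_k$, which is \eqref{pro2}. Every step is reversible, which gives the claimed equivalence.

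\textbf{Main obstacle.} The only delicate point is the sign and normalization conventions: the direction of $\mu$ and $e$, and the sign of the geodesic curvature $\nabla_{e_k}e_n=\cot\theta\, e_k$. I would fix these by computing at an explicit boundary point, e.g. $\xi=\sin\theta\, E_1$ in the translated coordinates. There I would verify $\langle\mu,e\rangle$ and the umbilicity constant directly, checking that the Robin condition for $h$ becomes $u_n=0$.
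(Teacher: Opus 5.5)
Your proposal is correct. Step 1 is exactly the paper's argument: differentiate $h_n=\cot\theta\,h$ tangentially and use $\nabla_{e_k}e_n=\cot\theta\,e_k$.

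For \eqref{pro2} your route differs from the paper's. The paper reruns the same tangential-differentiation trick on the Neumann condition $u_n=0$, giving $0=e_k(u_n)=u_{kn}+\cot\theta\,u_k$. You instead expand $h=lu$ by the product rule and use the boundary data $l=\sin^2\theta$, $l_k=0$, $l_{kn}=0$, $l_n=\sin\theta\cos\theta$. This gives the pointwise identity $h_{kn}=\sin^2\theta\,(u_{kn}+\cot\theta\,u_k)$ on $\partial C_\theta$.

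Your route needs a few more facts about $l$, but it proves more. The identity holds for every smooth $h$, independent of the Robin condition, so it establishes the claimed equivalence literally. The paper's argument only shows that both identities follow from the boundary conditions $h_n=\cot\theta\,h$ and $u_n=0$.

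Two small corrections:
\begin{enumerate}
\item The Hessian of $l$ is $\nabla^2 l=(1-l)\sigma$, i.e. the paper's $\nabla^2 l+l\sigma=\sigma$, not $-(l-\sin^2\theta)\sigma$. This is harmless, since any multiple of $\sigma$ gives $l_{kn}=0$.
\item The sign of $l_n$ genuinely matters, because the opposite sign would flip \eqref{pro2}. It is cleaner to settle it by the explicit check you propose than by appeal to $u_n=0$. At the boundary point $\xi=\sin\theta E_1$, so $\zeta=\xi-\cos\theta e=\sin\theta E_1+\cos\theta E_{n+1}$, the outward conormal is $\mu=\cos\theta E_1-\sin\theta E_{n+1}$. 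Hence $\langle e,\mu\rangle=\sin\theta$ and $l_n=\cos\theta\langle e,\mu\rangle=\sin\theta\cos\theta=\cot\theta\,l$, as needed.
\end{enumerate}
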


\begin{proof}
    First, taking tangential derivative of the equation $h_n = \cot\theta h$ along $\partial C_{\theta}$, we have $\nabla_{e_k}h_n = \cot\theta h_k$. It follows that 
    \begin{equation*}
\begin{split}
    h_{kn} &= \nabla^2 h(e_k,e_n) = \langle \nabla_{e_k}(\nabla h), e_n\rangle  \\ 
    &= 
    \nabla_{e_k}(\langle \nabla h, e_n\rangle) - \langle\nabla h,\nabla _{e_k}e_n\rangle \\ 
    &= \nabla_{e_k}h_n -\langle \nabla h, \cot\theta e_k\rangle \\ 
    &= 
    \cot\theta h_k - \cot\theta h_k = 0,
\end{split}
    \end{equation*}
    where we have used $\nabla_{e_k}e_n = \cot\theta e_k$. Then the equation \eqref{pro1} follows. Using the same computation and the fact $u_n =0$ in \eqref{a2}, we also have \eqref{pro2}. We complete the proof.
\end{proof}

\section{A priori estimates} \label{se3}
In this section, we give a priori estimates (including $C^0$ and $C^1$ estimates) for solutions to Eq. \eqref{Eq}.

\begin{lemma}[{\bf $C^0$ estimate}]\label{C0 estimate}
    Let $p>q$ and $\theta\in(0, \pi)$. Suppose $h$ is a positive smooth strictly convex solution to Eq. \eqref{Eq}, that is, $A=\nabla^2h+h\sigma>0$ on $C_{\theta}$. Then we have 
    \begin{equation}\label{a1}
       \min\{ 1-\cos\theta, \sin^2\theta\}^{p-q}\cdot C_2
        \leq h^{p-q}\leq
       \max\{ 1-\cos\theta, \sin^2\theta\}^{p-q}\cdot C_1 ,
    \end{equation}
    where the positive constant $C_1$ and $C_2$ depend on $n$, $p$, $q$, $\theta$, $\min_{C_{\theta}}f$ and $\Vert f\Vert_{C^0}(C_{\theta})$.
    In particular, $h$ has positive lower and upper bounds depending only on
    $n,p,q,\theta$, $\min_{C_{\theta}}f$ and $\Vert f\Vert_{C^0}(C_{\theta})$.
\end{lemma}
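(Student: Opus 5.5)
Following the outline in the Introduction, we pass to the capillary support function $u=h/l$, which satisfies the homogeneous Neumann condition $u_n=0$ on $\partial C_{\theta}$, and apply the maximum principle at the extremal points of $u$. The constants in \eqref{a1} then come from the range of $l$ on $C_{\theta}$. Writing $\xi=\cos\theta e+z$ with $z\in\S^n_\theta$, we get $\langle\xi,e\rangle=\cos\theta-\langle z,E_{n+1}\rangle\in[\cos\theta-1,0]$. Hence $l=\sin^2\theta+\cos\theta\langle\xi,e\rangle$ takes values between $\sin^2\theta$ (on $\partial C_\theta$) and $\sin^2\theta+\cos\theta(\cos\theta-1)=1-\cos\theta$ (at the pole). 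So $l$ lies between $\min\{1-\cos\theta,\sin^2\theta\}$ and $\max\{1-\cos\theta,\sin^2\theta\}$, for every $\theta\in(0,\pi)$.

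First I verify $u_n=0$ on $\partial C_\theta$, which is the fact \eqref{a2} cited in Proposition \ref{pro}. On $\partial C_\theta$ we have $\langle\xi,e\rangle=0$, so $l=\sin^2\theta$. Also $\nabla_\mu l=\cos\theta\langle\mu,e\rangle$, and the outward conormal $\mu$ of the cap boundary satisfies $\langle \mu,e\rangle=\sin\theta$. This gives $\nabla_\mu l=\cos\theta\sin\theta=\cot\theta\, l$. Combined with $h_n=\cot\theta\,h$, this yields $u_n=(h_n l-h l_n)/l^2=0$.

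Next, let $\xi_0$ be a maximum point of $u$ on $C_\theta$. If $\xi_0\in\partial C_\theta$, then $u_n(\xi_0)=0$ and the tangential derivatives vanish, so $\nabla u(\xi_0)=0$ in either case. For the second derivatives at a boundary maximum, I use the one-sided condition $u_{nn}\le 0$, together with $u_{kn}=-\cot\theta u_k=0$ from \eqref{pro2}; the tangential Hessian is nonpositive. So in all cases $\nabla^2u(\xi_0)\le0$.

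The key is then to compare $A$ with $u$. Since $l$ is the restriction of an affine function, $\nabla^2 l+l\sigma=\cos\theta\,(\nabla^2\langle\xi,e\rangle+\langle\xi,e\rangle\sigma)+\sin^2\theta\sigma$. On the unit sphere centered at $\cos\theta e$, the function $\langle\xi-\cos\theta e,e\rangle$ satisfies $\nabla^2 f+f\sigma=0$, so $\nabla^2 l+l\sigma=(\sin^2\theta+\cos^2\theta)\sigma=\sigma$. Then
\[
\nabla^2h+h\sigma=l\nabla^2u+2\nabla l\otimes\nabla u+u\sigma .
\]
At $\xi_0$ this gives $A\le u\sigma$, so $\det A\le u^n$. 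We also have $h^2+|\nabla h|^2=u^2(l^2+|\nabla l|^2)$, and $l^2+|\nabla l|^2=|\nabla l|^2+l^2$ is the squared length of the vector $\cos\theta e+\sin^2\theta(\xi-\cos\theta e)$-type expression. By direct computation this equals $|\xi|^2=l+\cos\theta\langle\xi,e\rangle$-type quantities, bounded above and below by positive constants depending on $\theta$. Plugging into \eqref{Eq} gives
\[
u^n\ge f\,l^{p-1}u^{p-1}\,u^{n+1-q}(l^2+|\nabla l|^2)^{\frac{n+1-q}{2}},
\]
that is, $u^{p-q}\le C$ at $\xi_0$, using $p>q$. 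Hence $h^{p-q}=l^{p-q}u^{p-q}\le \max l^{p-q}\cdot\max u^{p-q}$, which yields the upper bound in \eqref{a1}. The minimum point is handled symmetrically, with $A\ge u\sigma$, giving the lower bound with $\min f$.

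The main obstacle is the boundary extremum. There one must check that $\nabla^2u\le0$ holds in full (including the mixed terms), which is exactly where \eqref{pro2} and $u_n=0$ enter. The second point to verify carefully is the identity $\nabla^2l+l\sigma=\sigma$ and the two-sided bounds on $l^2+|\nabla l|^2$, which feed the constants $C_1$ and $C_2$.
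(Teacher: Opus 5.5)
Your proposal is correct and follows the paper's proof essentially step for step: pass to $u=h/l$ with $\nabla_\mu u=0$, use $\nabla^2 l+l\sigma=\sigma$ and $h^2+|\nabla h|^2=u^2(l^2+|\nabla l|^2)$ at critical points of $u$ (the paper writes this quantity as $\psi=2l-\sin^2\theta$, which equals your $|\xi|^2$), and compare $\det A$ with $u^n$ at the extrema, with boundary extrema handled via $u_{\mu k}=0$ and the sign of $u_{\mu\mu}$. One small slip: at the minimum point the lower bound comes from $f^{-1}\geq 1/\max f$, so it depends on $\Vert f\Vert_{C^0}$ rather than $\min f$; also, the cross term in $A$ should be the symmetrized $\nabla l\otimes\nabla u+\nabla u\otimes\nabla l$, which vanishes at the extremum anyway.
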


\begin{proof}
    Let
    \[
        u=l^{-1}h.
    \]
    Since $h$ solves Eq. \eqref{Eq}, the function $u$ satisfies
    \begin{equation}\label{a2}
    \left\{
    \begin{aligned}
    &\det\left(l\nabla^2 u+\cos\theta\bigl(\nabla u\otimes e^T
    +e^T\otimes\nabla u\bigr)+u\sigma\right)\\
    &\qquad\qquad
    =f(ul)^{p-1}\left((ul)^2+|\nabla(ul)|^2\right)^{\frac{n+1-q}{2}}
    &&\text{in }C_{\theta},\\
    &\nabla_{\mu}u=0&&\text{on }\partial C_{\theta},
    \end{aligned}
    \right.
    \end{equation}
    where $e^T$ is the tangential part of $e$ on $C_{\theta}$, and we used that $\nabla_{\mu} l = \cot\theta\cdot l$ on $\partial C_{\theta}$.
    Let $\beta = n+1-q$ and  $\zeta=\xi-\cos\theta e$. Then $|\zeta|=1$ on $C_{\theta}$ and
    \[
        l=\sin^2\theta+\cos\theta\langle\xi,e\rangle
        =1+\cos\theta\langle\zeta,e\rangle.
    \]
    Since $-1\leq\langle\zeta,e\rangle\leq-\cos\theta$ on $C_{\theta}$, we have
    \[
        \min\{1-\cos\theta, \sin^2\theta\}\leq l\leq \max\{1-\cos\theta, \sin^2\theta\}.
    \]
    Moreover,
    \[
        |\nabla\langle\zeta,e\rangle|^2=1-\langle\zeta,e\rangle^2,
    \]
    and let $\psi = l^2 + |\nabla l|^2$, therefore
    \[
        \psi
        =(1+\cos\theta\langle\zeta,e\rangle)^2
        +\cos^2\theta\bigl(1-\langle\zeta,e\rangle^2\bigr)
        =2l-\sin^2\theta,
    \]
    hence
    \[
      \min\{  (1-\cos\theta)^2, \sin^2\theta\} \leq \psi\leq \max\{(1-\cos\theta)^2, \sin^2\theta\}.
    \]
    Since $h=ul$ and $\nabla^2l+l\sigma=\sigma$, we have
    \[
        A_{ij}=l u_{ij}+u\sigma_{ij}+l_i u_j+l_j u_i.
    \]
    We also have $\nabla_{\mu}h=\cot\theta\cdot h$ and $\nabla_{\mu}l=\cot\theta\cdot l$ on
    $\partial C_{\theta}$, so
    \[
        \nabla_{\mu}u=0\qquad\text{on }\partial C_{\theta}.
    \]

    At any interior minimum (or maximum) point of $u$ we have $\nabla u=0$ and $\nabla^2u\geq 0$ (or $\nabla^2 u\leq 0$). If an minimum (or maximum) point occurs at the boundary, then
    the tangential derivatives vanish and $\nabla_{\mu}u=0$, we also have $u_{\mu k} = 0$, where $k=1,2,\cdots, n-1$.
    The second derivative in the double normal direction gives
    $u_{\mu\mu}\leq0$ at a boundary maximum point and $u_{\mu\mu}\geq0$ at a boundary
    minimum point. Thus the same Hessian semidefinite conclusion holds at boundary
    extrema point.

    At an extremum point of $u$ the terms $l_i u_j+l_j u_i$ vanish, and
    $
        h^2+|\nabla h|^2=u^2\bigl(l^2+|\nabla l|^2\bigr)=u^2\psi,
    $
    so the equation \eqref{Eq} therefore becomes
    \[
        \det(l\nabla^2u+u\sigma)
        =f u^{n+p-q}l^{p-1}\psi^{\beta/2}.
    \]

    Suppose $u$ attains the maximum value at some point $\xi_0\in C_{\theta}$. At $\xi_0$, we obtain
    \[
        f u^{n+p-q}l^{p-1}\psi^{\beta/2}
        =\det(l\nabla^2u+u\sigma)\leq u^n,
    \]
    which gives
    \[
        u(\xi_0)^{p-q}\leq
        f(\xi_0)^{-1}l(\xi_0)^{1-p}\psi(\xi_0)^{-\beta/2},
    \]
    denote $G:=f^{-1}l^{1-p}\psi^{-\beta/2}$, then 
    $$u(\xi_0)^{p-q} \leq \max_{C_{\theta}}G=:C_1,$$
    where the positive constant $C_1$ depends on $n$, $p$, $q$, $\theta$, $\min_{C_{\theta}}f$ and $\Vert f\Vert_{C^0}(C_{\theta})$.
    Therefore, for every $\xi\in C_{\theta}$, we have
    \[
        h(\xi)^{p-q}
        =u(\xi)^{p-q}l(\xi)^{p-q}
        \leq \max\{1-\cos\theta, \sin^2\theta\}^{p-q}\cdot C_1.
    \]

    Similarly, if $u$ attains the minimum value at $\xi_1\in C_{\theta}$, there holds
    \[
        f u^{n+p-q}l^{p-1}\psi^{\beta/2}
        =\det(l\nabla^2u+u\sigma)\geq u^n,
    \]
    which gives
    \[
        u(\xi_1)^{p-q}\geq
        f(\xi_1)^{-1}l(\xi_1)^{1-p}\psi(\xi_1)^{-\beta/2}
        \geq \min_{C_{\theta}}G=:C_2,
    \]
    where the positive constant $C_2$ depends on $n$, $p$, $q$, $\theta$ and $\Vert f\Vert_{C^0}(C_{\theta})$.
   Therefore, for every $\xi\in C_{\theta}$, we have
    \[
        h(\xi)^{p-q}
        =u(\xi)^{p-q}l(\xi)^{p-q}
        \geq \min\{1-\cos\theta, \sin^2\theta\}^{p-q}\cdot C_2.
    \]
   This completes the proof.
\end{proof}

Next, we give the $C^1$-estimate, which is based solely on convexity and is independent of the specific equation.

\begin{lemma}[{\bf $C^1$ estimate}]\label{c1-1}
    Let $p>q$ and $\theta\in (0,\pi)$. Suppose $h$ is a positive smooth strictly convex solution of Eq. \eqref{Eq}, then there holds
    \begin{equation}\label{b1}
        \max_{C_{\theta}}|\nabla h| \leq (1+\cot^2\theta)^{1/2} \Vert h\Vert_{C^{0}(C_{\theta})}.
    \end{equation}
\end{lemma}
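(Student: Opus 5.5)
The plan is to use only convexity: $\Sigma$ is the boundary of a convex body, and $X(\xi)=\nabla h(\xi)+h(\xi)T^{-1}(\xi)$. Since $\nabla h$ is tangent to $C_\theta$ and $T^{-1}(\xi)$ is the unit normal there, we get
\[
|X(\xi)|^2=|\nabla h(\xi)|^2+h(\xi)^2 .
\]
So bounding $|\nabla h|$ amounts to bounding $|X|$ on $\Sigma$, i.e.\ bounding the distance from the origin to the points of $\Sigma$. We argue with $|\nabla h|^2$ directly and look at its maximum point $\xi_0\in C_\theta$.

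\emph{Interior maximum.} Suppose $\xi_0$ is an interior point. Then $\nabla |\nabla h|^2=0$ gives $\sum_k h_k h_{ki}=0$, which we rewrite as $\sum_k h_k A_{ki}=h\,h_i$. Pairing with $h_i$ yields
\[
A(\nabla h,\nabla h)=h|\nabla h|^2 .
\]
This is consistent with $A>0$ and gives no contradiction, so a Hessian argument alone is not enough. Geometrically, however, $h(\xi)=\max_{x\in\Sigma}\langle x,\xi-\cos\theta e\rangle$ extends as the support function of $\hat\Sigma$. Since $\hat\Sigma$ contains the origin, $|X|\le \max$ of the support function over all unit directions. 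The maximum of $|x|$ over $\hat\Sigma$ is attained at some point $x_0\in\Sigma$ (a boundary point of $\hat\Sigma$ away from the flat part), or on $\partial\Sigma$. If it is attained in the interior of $\Sigma$, then $x_0$ is parallel to its outward normal $\nu(x_0)$, so $|x_0|=h(\nu(x_0))\le \|h\|_{C^0}$. In that case $|\nabla h|\le|X|\le\|h\|_{C^0}$.

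\emph{Boundary case.} The remaining case is that $\max|x|$ over $\hat\Sigma$ is attained on $\partial\Sigma\subset\partial\R^{n+1}_+$. This is the case producing the factor $(1+\cot^2\theta)^{1/2}=1/\sin\theta$, and it is the step I expect to be the main obstacle. On $\partial C_\theta$, take the frame with $e_n=\mu$. The Robin condition gives $h_n=\cot\theta\,h$, hence
\[
|\nabla h|^2=\sum_{k<n}h_k^2+\cot^2\theta\,h^2 .
\]
Restrict to $\partial C_\theta$, which is a sphere of radius $\sin\theta$. There $X$ lies in the hyperplane $\{x_{n+1}=0\}$, and $X|_{\partial C_\theta}$ is the parametrization of the convex $(n-1)$-dimensional boundary of the flat face $\hat{\partial\Sigma}$ by its normal. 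The horizontal unit normal is $\bar\nu=(\xi-\cos\theta e)^{\top}/\sin\theta$. From this, the planar support function of $\hat{\partial\Sigma}$ equals $h/\sin\theta$ restricted to $\partial C_\theta$.

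Now maximize $|X|$ over $\partial\Sigma$. At the maximizer $x_0$, the vector $x_0$ is parallel to the planar normal $\bar\nu$. This is the same argument as above, one dimension lower, applied to a convex set containing the origin. Hence
\[
|x_0|=\frac{h(\xi_0)}{\sin\theta}\le (1+\cot^2\theta)^{1/2}\|h\|_{C^0}.
\]
Combining with the interior case gives $|\nabla h|\le|X|\le \max_{\hat\Sigma}|x|\le(1+\cot^2\theta)^{1/2}\|h\|_{C^0}$, which is \eqref{b1}. The details to verify carefully are that the support function of the flat face equals $h/\sin\theta$, using $|\xi-\cos\theta e|=1$ and $\langle \xi-\cos\theta e, e\rangle=-\cos\theta$ on $\partial C_\theta$, and that the origin lies in $\hat{\partial\Sigma}$ since $\Sigma\in\mathcal K^\circ_\theta$.
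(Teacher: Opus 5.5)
Your argument is correct and is essentially the paper's proof in geometric language. The paper maximizes $P=|\nabla h|^2+h^2$, which is exactly your $|X|^2$, and its two analytic steps are your two geometric ones: at an interior maximum, $h_kA_{ki}=0$ forces $\nabla h=0$, which is your $x_0\parallel\nu(x_0)$; at a boundary maximum, $h_{\alpha n}=0$ together with positivity of the tangential block $(A_{\alpha\beta})$ forces $h_\alpha=0$, so $P=(1+\cot^2\theta)h^2$, which is your $x_0\parallel\bar\nu$ with $|x_0|=h/\sin\theta$. The only difference is that you tacitly use the realization of $h$ as the support function of a capillary convex body (for instance $X(\partial C_\theta)\subset\{x_{n+1}=0\}$, which is the Robin condition), whereas the paper works directly with $A>0$, $h_{\alpha n}=0$ and $h_n=\cot\theta\,h$.
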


\begin{proof}
    We consider the function
    $$P:= |\nabla h|^2 + h^2.$$
    Suppose that the function $P$  attains its maximum value at some point $\xi_{0}\in C_{\theta}$. If $\xi_0 \in C_{\theta}\backslash\partial C_{\theta}$, we have
    $$0=\nabla_{e_i}P = 2h_{k}h_{ki} + 2hh_{i}, \quad for \quad 1\leq i\leq n.$$
Together with the convexity of $h$, i.e., $h_{ij}+h\delta_{ij}>0$, it follows $\nabla h(\xi_{0})=0$, so \eqref{b1} holds.

If $\xi_{0}\in \partial C_{\theta}$, we choose an orthonormal frame $\{e_i\}_{i=1}^{n}$ around $\xi_{0}\in\partial C_{\theta}$ such that $e_n = \mu$. From Proposition \ref{pro}, we know that
\begin{equation}\label{b2}
    h_{kn}=0 \qquad for~~any \quad 1\leq k\leq n-1.
\end{equation}
Then we have
$$0=\nabla_{e_k}P = 2\sum_{i=1}^{n}h_{i}h_{ik}+ 2hh_{k},$$
which implies
\begin{equation}\label{b3}
    h_{k}(\xi_{0}) = 0.
\end{equation}
Together with \eqref{b3} and $h_n = \cot\theta\cdot h$ on $\partial C_{\theta}$, we have
$$|\nabla h|^2(\xi_{0})\leq (|\nabla h|^2 + h^2)(\xi_0) = (h_n^2 + h^2)(\xi_{0}) = (1+\cot^2\theta)h^2(\xi_0).$$
In conclusion, \eqref{b1} holds.
\end{proof}

\section{$C^2$ estimates} \label{se4}
Now, we establish the priori $C^2$ estimate for the positive solution of Eq. \eqref{Eq}. All covariant
derivatives are taken with respect to the unit spherical metric on $C_{\theta}$.
Roman indices range from $1$ to $n$. At a boundary point, Greek indices range
over the tangential directions $1,\ldots,n-1$, and $e_n=\mu$ denotes the outward
unit normal of $\partial C_{\theta}\subset C_{\theta}$. We keep the notation
$A_{ij}=h_{ij}+h\sigma_{ij}$ and $P=h^2+|\nabla h|^2$ from the preceding
sections, recall $\beta=n+1-q$, and let $ S=\tr(A^{ij})$, where $A^{ij}$ is the inverse of $A_{ij}$.
When $A_{ij}$ is diagonal in an orthonormal frame, its eigenvalues are denoted by
$\lambda_i>0$, we also write $\sigma_k(A)$ for the $k$-th elementary symmetric
function of these eigenvalues. Denote
\[
       F(A):= \log\det A=\tilde f,\qquad
        \tilde f:=(p-1)\log h+\frac{\beta}{2}\log P+\log f.
\]

\begin{theorem}[{\bf{$C^2$ estimate}}]\label{c2 estimate}
Let $n\geq 3$, $p>q$, and $\theta\in (0,\frac{\pi}{2})$. Suppose that $h$ is a
positive $C^4$ strictly convex solution to Eq. \eqref{Eq}. Assume that
\[
        0<m\leq h\leq M,\qquad |\nabla h|\leq K,
\]
then there holds 
\begin{equation}\label{d1}
        \max_{C_{\theta}}|\nabla^2h|\leq C,
\end{equation}
where the positive constant $C$ depending only on
$n,p,q,\theta,m,M,K,\min_{C_{\theta}}f$ and
$\Vert f\Vert_{C^2(C_{\theta})}$, 
\end{theorem}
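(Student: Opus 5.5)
Since $A>0$, it suffices to bound $\sigma_1(A)=\Delta h+nh$ from above, because then $|\nabla^2h|\le \sigma_1(A)+n\,M$. I will apply the maximum principle to the test function announced in the introduction,
\[
Q:=\log\sigma_1(A)+a h+b|\nabla h|^2+\eta\log\sigma_{n-1}(A),\qquad \eta=1,\ a<0<b .
\]
Under the equation, $\sigma_{n-1}(A)=\det A\cdot \tr(A^{ij})=e^{\tilde f}S$. Since $\tilde f$ is bounded by Lemma \ref{C0 estimate} and Lemma \ref{c1-1}, the extra term is comparable to $\log S$. Hence a bound on $Q$ yields $\sigma_1(A)\,S\le C$, which gives a bound on $\sigma_1$ because $S\ge n\,(\det A)^{-1/n}$ is bounded below. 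Let $\xi_0$ be a maximum point of $Q$. I split into the cases $\xi_0\in C_\theta\setminus\partial C_\theta$ and $\xi_0\in\partial C_\theta$.

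\emph{Interior case.} Diagonalize $A$ at $\xi_0$, and let $F^{ij}=A^{ij}$ be the linearized operator of $F=\log\det A$. Differentiate the equation twice and commute derivatives on the sphere, using $A_{ijk}$ symmetric and $\nabla_k\nabla_l A_{ij}-\nabla_i\nabla_j A_{kl}=$ terms linear in $A$ with sphere curvature. This gives
\[
F^{ii}(\sigma_1)_{ii}\ge \Delta\tilde f+\sum_{i,k,l}\frac{A_{kli}^2}{\lambda_k\lambda_l}-n S+\sigma_1 S+\cdots .
\]
Here $\Delta\tilde f$ contains $\frac{\beta}{2}\Delta\log P$, and $P_{ii}$ involves $\sum_k h_{ki}^2$. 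This is the term responsible for the former restriction $q\le1$. When $\beta>0$ and the sign is bad, $\frac{\beta}{2P}\sum_k h_{kk}^2\sim\sigma_1^2/P$, and it also carries third derivatives $h_kA_{kii}$. The term $b|\nabla h|^2$ contributes $2bF^{ii}\lambda_i^2-Cb=2b\sigma_1-Cb$ plus third-order terms that are absorbed via the first-order condition $\nabla Q=0$. The term $ah$ contributes $aF^{ii}(A_{ii}-h)=a(n-hS)=-|a|n+|a|hS$, which gives a good $S$ term.

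For $\log\sigma_{n-1}(A)=\tilde f+\log S$, I compute $F^{ii}(\log S)_{ii}$. The concavity of $A\mapsto -\tr A^{-1}$ produces positive third-order terms
\[
\frac{2}{S}\sum \frac{A_{kli}^2}{\lambda_i\lambda_k^2\lambda_l},
\]
together with a second copy of $\Delta\tilde f$-type terms, which now appear with the factor $1/S$ relative to the dangerous term. Combining, I aim for
\[
0\ge F^{ii}Q_{ii}\ge \bigl(2b-C_0\bigr)\sigma_1+\bigl(|a|m-C\bigr)S-C .
\]
The dangerous quantity $\frac{\beta}{2P}\sum h_{kk}^2/\sigma_1$ is of order $\sigma_1$ and is dominated by choosing $b$ large relative to $\beta/P$. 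The dangerous third-order terms $\frac{\beta}{P}h_kA_{kii}/\sigma_1$ are handled by Cauchy–Schwarz: half of $\sum A_{11i}^2/(\lambda_1\lambda_i\sigma_1)$ (the usual concavity gain from $\log\sigma_1$) absorbs them, and the remaining off-diagonal bad terms are absorbed by the $\log S$ gain. This ordering, fixing $b$ first, then $|a|$, and using the $\log S$ term to kill the cross terms that Hu–Yang's function cannot control when $n\ge3$, is the main obstacle. I expect it to require a case split according to whether $\lambda_n\ge\delta\lambda_1$ or not, in the style of Guan–Ren–Wang. The outcome is $\sigma_1(\xi_0)\le C$.

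\emph{Boundary case.} Here I use Proposition \ref{pro}: $h_{\alpha n}=0$, and $\nabla_{e_\alpha}e_n=\cot\theta\, e_\alpha$ with $\cot\theta>0$ since $\theta<\pi/2$, so $\partial C_\theta$ is strictly convex. I show that $\nabla_\mu Q<0$ whenever $\sigma_1$ is large, which contradicts maximality. First, $\nabla_\mu|\nabla h|^2=2h_nh_{nn}+2h_\alpha h_{\alpha n}$. Since $h_n=\cot\theta\, h$, this equals $2\cot\theta\, h\,h_{nn}$, which is positive and not directly helpful. Instead I differentiate the Robin condition tangentially twice to obtain
\[
\nabla_\mu A_{\alpha\alpha}=\cot\theta\,(A_{nn}-A_{\alpha\alpha})+\text{l.o.t.}
\]
For $A_{nn}$, I differentiate the equation in the normal direction, using $h_{\alpha n}=0$ so that $A$ is block diagonal. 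This gives $\nabla_\mu\sigma_1\le -c\cot\theta\,\sigma_1+C$, and the analogous estimate for $\nabla_\mu\log S$ with $\eta=1$. With $b$ fixed, I choose $a$ so that $\nabla_\mu(ah+b|\nabla h|^2)\le C$ on $\partial C_\theta$, which is a bounded quantity. Hence $\nabla_\mu Q<0$ once $\sigma_1$ is large, and the boundary maximum also forces $\sigma_1\le C$. Combining the two cases bounds $Q$, hence $\sigma_1 S$, hence $|\nabla^2h|$, which proves \eqref{d1}.
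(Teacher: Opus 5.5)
Your overall design matches the paper: the same $Q$ with $\eta=1$, the reduction to bounding $\sigma_1$, and the interior bookkeeping for $ah$ and $b|\nabla h|^2$. However, both the boundary case and the interior case have a genuine gap.

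\textbf{Boundary case.} Your claim that $a$ can be chosen so that $\nabla_\mu(ah+b|\nabla h|^2)\le C$ is false. By $h_n=\cot\theta\,h$ and $h_{\alpha n}=0$,
\[
\nabla_\mu\bigl(ah+b|\nabla h|^2\bigr)=a\cot\theta\,h+2b\cot\theta\,h(\lambda_n-h),\qquad \lambda_n=A_{nn}.
\]
This grows linearly in $\lambda_n$, and no constant $a$ can compensate. Your bound $\nabla_\mu\sigma_1\le -c\cot\theta\,\sigma_1+C$ only gives $\nabla_\mu\log\sigma_1\le -c\cot\theta+C/\sigma_1$, which is an $O(1)$ negative quantity. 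So ``$\nabla_\mu Q<0$ once $\sigma_1$ is large'' does not follow, and an $O(1)$ ``analogous estimate'' for $\nabla_\mu\log S$ would not help either.

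This is exactly where $\log\sigma_{n-1}$ and $n\ge3$ are needed. You need the exact identity
\[
\nabla_\mu\log\sigma_{n-1}(A)=\Bigl(1-\frac{1}{\lambda_nS}\Bigr)\tilde f_n-\frac{\cot\theta\,\lambda_n}{S}\sum_{\alpha<n}\Bigl(\frac1{\lambda_\alpha}-\frac1{\lambda_n}\Bigr)^2,
\]
together with a split into two regimes:
\begin{itemize}
\item If $\lambda_n\le m/2$, then $\lambda_n-h<0$ makes the $b$-term harmless, and a sufficiently negative $a$ beats $\frac{\sigma_1}{\lambda_n}\tilde f_n$. This gives $\sigma_1\le C$.
\item If $\lambda_n\ge m/2$, multiply $Q_n$ by $\sigma_1/\lambda_n$. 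The $b$-term is then only $O(\sigma_1)$. By Cauchy--Schwarz, the last term above is at most $-\frac{\cot\theta}{4(n-1)}\sigma_1S$ whenever $S\ge 4n/m$. This gives $S\le C$, hence $\lambda_i\le D_+S^{n-1}$.
\end{itemize}
In both regimes the outcome is an upper bound at $\xi_0$, not a sign contradiction.

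\textbf{Interior case.} You leave the decisive step open (``I aim for'', ``I expect a case split''), and the mechanism you suggest does not work. You propose to absorb $\frac{\beta}{P}\sum_k h_kA_{ii,k}$ by Cauchy--Schwarz into half of the concavity gain of $\log\sigma_1$. That half is not available: when one eigenvalue dominates, the positive term is almost entirely consumed by $-\sigma_1^{-2}\sum_k\lambda_k^{-1}\bigl(\sum_iA_{ii,k}\bigr)^2$.

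The missing observation is about weights. In $\sum_i\lambda_i^{-1}Q_{ii}$, the terms $\tilde f_{ii}$ appear with weight $W_i=\sigma_1^{-1}+\lambda_i^{-1}-\lambda_i^{-2}/S$. This is precisely the weight in
\[
Q_k=\sum_iW_iA_{ii,k}+\bigl(a+2b(\lambda_k-h)\bigr)h_k .
\]
Consequently:
\begin{itemize}
\item the condition $Q_k=0$ removes the linear third-order terms exactly;
\item the resulting term $-\frac{2b\beta}{P}\sum_kh_k^2\lambda_k$ cancels the identical term in $2b\sum_kh_k\tilde f_k$;
\item the remaining quadratic third-order terms $\mathcal G_{\sigma_1}+\mathcal G_S$ are nonnegative by Lemma~\ref{third forms}.
\end{itemize}
No Guan--Ren--Wang case split is needed.

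Note also that in the interior the $\log\sigma_{n-1}$ term does not kill cross terms. Its contribution $-S$ cancels the $+S$ coming from $\log\sigma_1$. All control of $S$ therefore comes from $a(n-hS)$, combined with the bounds $W_i\ge0$, $\sum_iW_i\lambda_i^2\le2\sigma_1$ and $\sum_iW_i\le C+S$.
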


\begin{proof}
The equation and the lower-order bounds give determinant bounds
\[
        D_-\leq\det A\leq D_+,
\]
where
\[
        D_- =
        \min_{C_{\theta}}f\cdot\min\{m^{p-1},M^{p-1}\}
        \cdot\min\{(m^2)^{\beta/2},(M^2+K^2)^{\beta/2}\}
\]
and
\[
        D_+ =
        \max_{C_{\theta}}f\cdot\max\{m^{p-1},M^{p-1}\}
        \cdot\max\{(m^2)^{\beta/2},(M^2+K^2)^{\beta/2}\}.
\]
Let
\begin{equation}\label{auxiliary Q}
        Q:=\log\sigma_1(A)+ah+b|\nabla h|^2+\log\sigma_{n-1}(A),
\end{equation}
where $b>0$ and $a<0$ will be chosen later. Suppose that $Q$ attains its maximum value at some point $\xi_0\in C_{\theta}$. We divide this proof into two cases: either $\xi_0\in \partial C_{\theta}$ or $\xi_0\in C_{\theta} \backslash \partial C_{\theta}$.

\noindent\textbf{Case 1. $\xi_0\in C_{\theta} \backslash \partial C_{\theta}$.}
In this case, we choose an orthonormal frame $\{e_i\}_{i=1}^n$ around $\xi_0$, such that $A_{ij}=(h_{ij}+h\delta_{ij})$ is diagonal, hence $F^{ij}:=\frac{\partial F(A)}{\partial A_{ij}}$ and $h_{ij}$ are also diagonal at $\xi_0$. In this case, we have
\[
        A_{ij,k}=A_{ik,j},\qquad
        A_{ii,jj}=A_{jj,ii}+A_{ii}-A_{jj}.
\]

Since
\[
        \sigma_{n-1}(A)=\det(A)\,S,
\]
define
\[
        W_i=\frac{1}{\sigma_1(A)}+\lambda_i^{-1}
        -\frac{\lambda_i^{-2}}{S}.
\]
Then at $\xi_0$, we have
\begin{equation}\label{Q stationary}
        0=Q_k
        =\sum_iW_iA_{ii,k}+\bigl(a+2b(\lambda_k-h)\bigr)h_k.
\end{equation}

We need the quadratic third-derivative terms that occur after contracting
second derivatives with $A^{ij}$.

\begin{lemma}\label{third forms}
At a diagonal point where $A_{ij,k}$ is symmetric in all three indices, set
\[
\begin{split}
        \mathcal G_{\sigma_1}
        &=\frac{1}{\sigma_1(A)}
        \sum_{i,j,k}\lambda_i^{-1}\lambda_j^{-1}A_{ij,k}^2
        -\frac{1}{\sigma_1(A)^2}
        \sum_k\lambda_k^{-1}\left(\sum_iA_{ii,k}\right)^2,
\end{split}
\]
and
\[
\begin{split}
        \mathcal G_S
        &=\frac{1}{S}
        \sum_{i,j,k}
        \left(2\lambda_k^{-1}\lambda_i^{-2}\lambda_j^{-1}
        -\lambda_k^{-2}\lambda_i^{-1}\lambda_j^{-1}\right)A_{ij,k}^2\\
        &\qquad
        -\frac{1}{S^2}
        \sum_k\lambda_k^{-1}
        \left(\sum_i\lambda_i^{-2}A_{ii,k}\right)^2.
\end{split}
\]
Then $\mathcal G_{\sigma_1}\geq0$ and $\mathcal G_S\geq0$.
\end{lemma}

\noindent\emph{Proof of Lemma \ref{third forms}.}
Fix the derivative index $k$ and denote $v_i=A_{ii,k}$. For
$\sigma_1(A)\cdot\mathcal G_{\sigma_1}$, the diagonal coefficients after grouping all terms
forced by the three-index symmetry are
\[
        d_k=\lambda_k^{-2},\qquad
        d_i=\lambda_i^{-2}+2\lambda_i^{-1}\lambda_k^{-1}\quad (i\ne k),
\]
and the negative term is
\[
        -\frac{\lambda_k^{-1}}{\sigma_1(A)}
        \left(\sum_i v_i\right)^2.
\]
Cauchy-Schwarz inequality gives
\[
        \left(\sum_i v_i\right)^2
        \leq \left(\sum_i d_i v_i^2\right)\left(\sum_i d_i^{-1}\right).
\]
Here $d_k^{-1}=\lambda_k^2$, and for $i\ne k$,
\[
        d_i^{-1}
        =\frac{\lambda_i^2\lambda_k}{\lambda_k+2\lambda_i}
        \leq \lambda_i\lambda_k.
\]
Thus
\[
        \sum_i d_i^{-1}\leq \lambda_k\sum_i\lambda_i
        =\lambda_k\sigma_1(A),
\]
which makes the grouped diagonal part nonnegative. If $i,j,k$ are pairwise
distinct, the total coefficient of the six symmetric permutations is
\[
        2(\lambda_i^{-1}\lambda_j^{-1}
        +\lambda_i^{-1}\lambda_k^{-1}
        +\lambda_j^{-1}\lambda_k^{-1})\geq0.
\]
Therefore $\mathcal G_{\sigma_1}\geq0$.

Similarly, for $S\cdot\mathcal G_S$, again fix $k$ and write $v_i=A_{ii,k}$. The diagonal
coefficients are
\[
        d_k=\lambda_k^{-4},\qquad
        d_i=\lambda_i^{-2}\lambda_k^{-1}
        (2\lambda_i^{-1}+\lambda_k^{-1})\quad (i\ne k),
\]
and the negative term is
\[
        -\frac{\lambda_k^{-1}}{S}
        \left(\sum_i\lambda_i^{-2}v_i\right)^2.
\]
Cauchy-Schwarz inequality gives
\[
        \left(\sum_i\lambda_i^{-2}v_i\right)^2
        \leq \left(\sum_i d_i v_i^2\right)
        \left(\sum_i\frac{\lambda_i^{-4}}{d_i}\right).
\]
It is enough to prove
\[
        \lambda_k^{-1}
        +\sum_{i\ne k}
        \frac{\lambda_i^{-2}}{2\lambda_i^{-1}+\lambda_k^{-1}}
        \leq S,
\]
and this follows from
\[
        \frac{\lambda_i^{-2}}{2\lambda_i^{-1}+\lambda_k^{-1}}
        \leq \lambda_i^{-1}.
\]
For pairwise distinct $i,j,k$, the coefficient of the six symmetric
permutations is
\[
        2\lambda_i^{-1}\lambda_j^{-1}\lambda_k^{-1}
        (\lambda_i^{-1}+\lambda_j^{-1}+\lambda_k^{-1})\geq0.
\]
Hence $\mathcal G_S\geq0$. This proves the local lemma. \hfill$\Box$

Differentiating $\log\det A=\tilde f$ twice we have
\begin{equation}\label{s4-1}
        \tilde f_{ii}
        =\sum_k\lambda_k^{-1}A_{kk,ii}
        -\sum_{k,s}\lambda_k^{-1}\lambda_s^{-1}A_{ks,i}^2,
\end{equation}
so, for $\log\sigma_1(A)$ we know
\[
        \sum_i\lambda_i^{-1}(\log\sigma_1(A))_{ii}
        =\mathcal G_{\sigma_1}+\frac{1}{\sigma_1(A)}\sum_i\tilde f_{ii}
        +S-\frac{n^2}{\sigma_1(A)}.
\]
For the inverse term $S$, by directly calculate
\[
        S_k=-\sum_i\lambda_i^{-2}A_{ii,k},
\]
and
\begin{equation}\label{s4-2}
        S_{kk}
        =-\sum_i\lambda_i^{-2}A_{ii,kk}
        +2\sum_{i,j}\lambda_i^{-2}\lambda_j^{-1}A_{ij,k}^2.
\end{equation}
Using the formula $A_{ii,jj}=A_{jj,ii}+A_{ii}-A_{jj}$ and \eqref{s4-1}, we have
\[
        \sum_k\lambda_k^{-1}A_{ii,kk}
        =\tilde f_{ii}
        +\sum_{k,s}\lambda_k^{-1}\lambda_s^{-1}A_{ks,i}^2
        +\lambda_i\cdot S-n.
\]
Then the formula \eqref{s4-2} can be written as
\[
\begin{split}
        \sum_k\lambda_k^{-1}S_{kk}
        &=-\sum_i\lambda_i^{-2}\tilde f_{ii}
        -S^2+n\sum_i\lambda_i^{-2}\\
        &\qquad
        +\sum_{i,j,k}
        \left(2\lambda_k^{-1}\lambda_i^{-2}\lambda_j^{-1}
        -\lambda_k^{-2}\lambda_i^{-1}\lambda_j^{-1}\right)A_{ij,k}^2.
\end{split}
\]
After subtracting $S^{-2}\sum_k\lambda_k^{-1}
(\sum_i\lambda_i^{-2}A_{ii,k})^2$, this yields
\begin{equation}\label{contracted log S}
        \sum_k\lambda_k^{-1}(\log S)_{kk}
        =\mathcal G_S-\frac{1}{S}\sum_i\lambda_i^{-2}\tilde f_{ii}
        -S+\frac{n\sum_i\lambda_i^{-2}}{S}.
\end{equation}
Also
\[
        \sum_i\lambda_i^{-1}h_{ii}=n-hS
\]
and
\[
\begin{split}
        \sum_i\lambda_i^{-1}(|\nabla h|^2)_{ii}
        &=2\sum_kh_k\tilde f_k+2\sigma_1(A)-4nh
        +2h^2S-2\sum_i\lambda_i^{-1}h_i^2.
\end{split}
\]
Combining the preceding formulas with $\log\sigma_{n-1}(A)=\tilde f+\log S$, at $\xi_0$,
we get
\begin{equation}\label{contracted Q}
\begin{split}
        0&\geq\sum_i\lambda_i^{-1}Q_{ii}\\
        &=\mathcal G_{\sigma_1}+\mathcal G_S+\sum_iW_i\tilde f_{ii}
        -\frac{n^2}{\sigma_1(A)}+a(n-hS)\\
        &\qquad
        +b\left(2\sum_kh_k\tilde f_k+2\sigma_1(A)-4nh
        +2h^2S-2\sum_i\lambda_i^{-1}h_i^2\right)
        +\frac{n\sum_i\lambda_i^{-2}}{S}.
\end{split}
\end{equation}
Lemma \ref{third forms} makes the first two terms nonnegative, and easy find that the last term is also
nonnegative.

Since $\tilde f:=(p-1)\log h+\frac{\beta}{2}\log P+\log f$, by directly calculate we know
\begin{equation}\label{f second derivatives}
\begin{split}
        \tilde f_{ii}
        &=\beta\left(
        \frac{\lambda_i^2}{P}
        -\frac{2h_i^2\lambda_i^2}{P^2}
        +\frac{\sum_k h_kA_{ii,k}}{P}
        -\frac{h\lambda_i}{P}\right)\\
        &\qquad
        +(p-1)\left(\frac{\lambda_i}{h}-1-\frac{h_i^2}{h^2}\right)
        +(\log f)_{ii}.
\end{split}
\end{equation}
Using $Q_k=0$, then
\[
      \frac{\beta}{P}
        \sum_kh_k\sum_iW_iA_{ii,k}=  -\frac{a\beta|\nabla h|^2}{P}
        -\frac{2b\beta}{P}\sum_kh_k^2\lambda_k
        +\frac{2b\beta h|\nabla h|^2}{P}.
\]
On the other hand,
\[
        2b\sum_kh_k\tilde f_k
        =2b(p-1)\frac{|\nabla h|^2}{h}
        +\frac{2b\beta}{P}\sum_kh_k^2\lambda_k
        +2b\sum_kh_k(\log f)_k.
\]
The terms containing $\sum_kh_k^2\lambda_k$ cancel.

Since
$P\geq m^2$ and $|h_i|^2\leq K^2$, the $\lambda_i^2$ terms above
are bounded from below by $-C\sum_iW_i\lambda_i^2$. For $W_i$ we have
\[
        0\leq W_i,\qquad
        \sum_iW_i\lambda_i^2\leq 2\sigma_1(A),\qquad
        \sum_iW_i\lambda_i= n,\qquad
        \sum_iW_i\leq C+S,
\]
where we used $\sigma_1(A)\geq nD_-^{1/n}$. Thus the terms containing
$\beta$ and $p-1$ are controlled for all signs of these numbers. The
$(\log f)_{ii}$ terms are bounded below by some negative constant $-C$, and
\[
        2h^2S-2\sum_i\lambda_i^{-1}h_i^2\geq -2K^2S.
\]

So the inequality \eqref{contracted Q} can be written as
\begin{equation}\label{interior inequality}
\begin{split}
        0\geq
        (2b-C)\sigma_1(A)+(-am-2bK^2-C)S
        -C(1+|a|+b).
\end{split}
\end{equation}
Choose $b=b_1>C+1$. Then choose $a<a_1<0$ such that
\[
        -a_1m-2bK^2-C\geq 1.
\]
Then at $\xi_0$, the formula \eqref{interior inequality} can be written as 
\[
        0\geq \sigma_1(A) - C(1+|a|+b).
\]
So we conclude that $\sigma_1(\xi_0)\leq C$, where the positive constant $C$ depends on $n, p, q, \min_{C_{\theta}}f, \min_{C_{\theta}}h, \Vert f\Vert_{C^2 (C_{\theta})}$ and $\Vert h\Vert _{C^0 (C_{\theta})}$.

\noindent\textbf{Case 2. $\xi_0\in \partial C_{\theta}$}
We choose an orthonormal frame $\{e_i\}_{i=1}^n$ around $\xi_0\in\partial C_{\theta}$ satisfying $e_n=\mu$ at $\xi_0$. So we know 
\[
        \lambda_n=A_{nn},\qquad
        \lambda_{\alpha}=A_{\alpha\alpha}, \quad \alpha\in 1,2,\cdots, n-1
\]

At the boundary, with Robin condition $h_n=\cot\theta\,h$, we know that $h_{\alpha n}=0$.
By directly calculate, we know the normal derivative of $\tilde{f}$ is 
\[
        \tilde f_n
        =(p-1)\cot\theta
        +\beta\cot\theta\,
        \frac{h\lambda_n}{P}
        +(\log f)_n,
\]
by $P_n=2\cot\theta\,h\lambda_n$. Differentiating
$\log\det A=\tilde f$ in the normal direction and using
$S=\lambda_n^{-1}+\sum_{\alpha<n}\lambda_{\alpha}^{-1}$ we have
\[
        (\lambda_n)_n
        =\lambda_n\tilde f_n
        -\cot\theta\,\lambda_n(\lambda_nS-n).
\]
Then
\[
\begin{split}
        (\sigma_1(A))_n
        &=(\lambda_n)_n+\sum_{\alpha<n}(\lambda_{\alpha})_n\\
        &=\lambda_n\tilde f_n
        +\cot\theta\bigl(2n\lambda_n-\sigma_1(A)-\lambda_n^2S\bigr),
\end{split}
\]
and
\[
\begin{split}
        S_n
        &=-\lambda_n^{-2}(\lambda_n)_n
        -\sum_{\alpha<n}\lambda_{\alpha}^{-2}(\lambda_{\alpha})_n\\
        &=-\frac{\tilde f_n}{\lambda_n}
        -\cot\theta\,\lambda_n
        \sum_{\alpha<n}\left(\frac1{\lambda_{\alpha}}
        -\frac1{\lambda_n}\right)^2.
\end{split}
\]
It follows from $\sigma_{n-1}(A)=\det(A)S$ that
\[
        (\log\sigma_{n-1}(A))_n
        =\left(1-\frac1{\lambda_nS}\right)\tilde f_n
        -\frac{\cot\theta\,\lambda_n}{S}
        \sum_{\alpha<n}\left(\frac1{\lambda_{\alpha}}
        -\frac1{\lambda_n}\right)^2.
\]
Together with $|\nabla h|^2_n=2\cot\theta\,h(\lambda_n-h)$ and
\[
        2n-\frac{\sigma_1(A)}{\lambda_n}-\lambda_nS
        =-\sum_{\alpha<n}\left(\frac{\lambda_n}{\lambda_{\alpha}}
        +\frac{\lambda_{\alpha}}{\lambda_n}-2\right). 
\]
Then, at $\xi_0$, we have
\begin{equation}\label{boundary Q derivative}
\begin{split}
        0\leq \frac{\sigma_1(A)}{\lambda_n}Q_n
        &=\left(1+\frac{\sigma_1(A)}{\lambda_n}
        -\frac{\sigma_1(A)}{\lambda_n^2S}\right)\tilde f_n\\
        &\qquad
        -\cot\theta
        \sum_{\alpha<n}\left(\frac{\lambda_n}{\lambda_{\alpha}}
        +\frac{\lambda_{\alpha}}{\lambda_n}-2\right)\\
        &\qquad
        +a\cot\theta\,h\frac{\sigma_1(A)}{\lambda_n}
        +2b\cot\theta\,h(\lambda_n-h)\frac{\sigma_1(A)}{\lambda_n}\\
        &\qquad
        -\cot\theta\,\frac{\sigma_1(A)}{S}
        \sum_{\alpha<n}\left(\frac1{\lambda_{\alpha}}
        -\frac1{\lambda_n}\right)^2.
\end{split}
\end{equation}

First suppose $\lambda_n\leq m/2$. Then $\lambda_n-h\leq -m/2$, so the
 term
 $$2b\cot\theta h(\lambda_n -h)\frac{\sigma_1(A)}{\lambda_n}$$
 is nonpositive. Then we can written \eqref{boundary Q derivative} as

 \begin{equation}\label{boundary Q derivative 2}
\begin{split}
        0&\leq \left(1+\frac{\sigma_1(A)}{\lambda_n}
        -\frac{\sigma_1(A)}{\lambda_n^2S}\right)\tilde f_n
        -\cot\theta
        \sum_{\alpha<n}\left(\frac{\lambda_n}{\lambda_{\alpha}}
        +\frac{\lambda_{\alpha}}{\lambda_n}-2\right)\\
        &\qquad
       +a\cot\theta\,h\frac{\sigma_1(A)}{\lambda_n} -\cot\theta\,\frac{\sigma_1(A)}{S}
        \sum_{\alpha<n}\left(\frac1{\lambda_{\alpha}}
        -\frac1{\lambda_n}\right)^2  \\ 
        & 
        \leq \left(1+\frac{\sigma_1(A)}{\lambda_n}
        -\frac{\sigma_1(A)}{\lambda_n^2S}\right)\tilde f_n
        -\cot\theta
        \sum_{\alpha<n}\left(\frac{\lambda_n}{\lambda_{\alpha}}
        +\frac{\lambda_{\alpha}}{\lambda_n}-2\right) +a\cot\theta\,h\frac{\sigma_1(A)}{\lambda_n}\\ 
        &\leq 
        \left(1+\frac{\sigma_1(A)}{\lambda_n}
        -\frac{\sigma_1(A)}{\lambda_n^2S}\right)\tilde f_n +a\cot\theta\,h\frac{\sigma_1(A)}{\lambda_n},
\end{split}
\end{equation}
 The last inequality holds by using $z+z^{-1}-2 = \frac{(z-1)^2}{z} ~~(z>0)$.
We know that 
$$ \tilde f_n
        =(p-1)\cot\theta
        +\beta\cot\theta\,
        \frac{h\lambda_n}{P}
        +(\log f)_n \leq C+C\lambda_n,
        $$
where the positive constant $C$ depend on $n, p, q, \min_{C_{\theta}}f, \min_{C_{\theta}}h, \Vert f\Vert_{C^2 (C_{\theta})}$ and $\Vert h\Vert _{C^0 (C_{\theta})}$. Since we suppose $\lambda_n \leq m/2$, so we have $\tilde{f}_n\leq C$. So we can rewritten the formula \eqref{boundary Q derivative 2} as 
\begin{equation}\label{s4-3}
0\leq C+\left(C+a\cot\theta\,m\right)
        \frac{\sigma_1(A)}{\lambda_n}.
\end{equation}

Then choose $a<a_2<0$ such that $C+a_2 \cot\theta m<-1$. Then we have
$$0\leq C-\frac{\sigma_1(A)}{\lambda_n}\leq C-\frac{2}{m}\sigma_1(A), $$
i.e.,
$$\sigma_1(A)\leq \frac{mC}{2}.$$

Next we consider the case that $\lambda_n\geq m/2$. In this case we know 
$\tilde f_n\leq C(1+\lambda_n)$, and
\[
        \left(1+\frac{\sigma_1(A)}{\lambda_n}
        -\frac{\sigma_1(A)}{\lambda_n^2S}\right)\tilde f_n
        \leq C(1+\sigma_1(A)).
\]

Then the formula \eqref{boundary Q derivative} can be written as
\begin{equation}\label{boundary inequality} 
\begin{split}
        0&\leq 
        C(1+\sigma_1(A))
        +a\cot\theta\,h\frac{\sigma_1(A)}{\lambda_n}-\cot\theta\,\frac{\sigma_1(A)}{S}
        \sum_{\alpha<n}\left(\frac1{\lambda_{\alpha}}
        -\frac1{\lambda_n}\right)^2\\
        &\leq 
        C(1+\sigma_1(A))-\cot\theta\,\frac{\sigma_1(A)}{S}
        \sum_{\alpha<n}\left(\frac1{\lambda_{\alpha}}
        -\frac1{\lambda_n}\right)^2.
\end{split}
\end{equation}

By Cauchy-Schwarz inequality,
\[
        \sum_{\alpha<n}\left(\frac1{\lambda_{\alpha}}
        -\frac1{\lambda_n}\right)^2
        \geq\frac1{n-1}\left(S-\frac n{\lambda_n}\right)^2.
\]
If $S\geq4n/m$, then $\lambda_n\geq m/2$ implies
$S-n/\lambda_n\geq S/2$. Therefore we have
\begin{equation}\label{s4-4}
        0\leq C(1+\sigma_1(A))
        -\frac{\cot\theta}{4(n-1)}\sigma_1(A)S.
\end{equation}
Since $\sigma_1(A)\geq nD_-^{1/n}>0$, divide both sides by $\sigma_1(A)$, then we have
$$0\leq C - \frac{\cot\theta}{4(n-1)}S,$$
so $S$ has a positive upper bound. If $S<4n/m$, it is already giving such a bound.
Hence $S\leq C$. Each eigenvalue satisfies
$\lambda_i\geq S^{-1}$, so we have
\[
        \lambda_i
        =\frac{\det A}{\prod_{j\ne i}\lambda_j}
        \leq D_+S^{n-1}\leq C.
\]
Thus $\sigma_1(A)(\xi_0)\leq C$. In conclusion, by setting $a=\min\{a_1, a_2\}, b=b_1$, we can obtain $\sigma_1(A)(\xi_0)\leq C$ at the maximum point $\xi_0\in C_{\theta}$ of $Q$. Then at $\xi_0$, we have 
$$\sigma_{n-1}(A)(\xi_0)\leq n\sigma_1(A)(\xi_0)^{n-1}\leq nC^{n-1},$$
then 
$$Q(\xi_0)\leq C.$$
On the other hand, by MacLaurin inequality, we have
$$\sigma_{n-1}(A)\geq n(\det A)^{(n-1)/n}\geq nD^{(n-1)/n}_-=: c_0>0.$$
In conclusion, at any point $\xi\in C_{\theta}$, we have 
$$\log \sigma_1(A)(\xi) + \log c_0 +aM\leq Q(\xi)\leq Q(\xi_0)\leq C,$$
that means $\sigma_1(A)\leq C$ in $C_{\theta}$.
\end{proof}

The curvature estimates imply the regularity bound needed for the continuity method.

\begin{corollary}[]\label{c2 theorem}
Let $n\geq 3$, $p>q$, and $\theta\in (0,\frac{\pi}{2})$. Suppose that $h$ is a
smooth positive strictly convex solution to Eq. \eqref{Eq}. Then
\[
        \min_{C_{\theta}}h\geq c,
\]
and, for any $\alpha\in(0,1)$,
\[
        \Vert h\Vert_{C^{3,\alpha}(C_{\theta})}\leq C,
\]
where the positive constants $c$ and $C$ depend only on
$n,p,q,\theta,\alpha$, and $f$.
\end{corollary}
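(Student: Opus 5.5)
The plan is to chain the a priori estimates already established and then invoke standard regularity theory for fully nonlinear oblique (Robin) boundary problems.

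\emph{Zeroth and first order.} Lemma \ref{C0 estimate} gives
\[
0<m\leq h\leq M,
\]
with $m,M$ depending only on $n,p,q,\theta,\min_{C_\theta}f$ and $\Vert f\Vert_{C^0}$; this is the asserted lower bound $\min_{C_\theta}h\geq c$. Lemma \ref{c1-1} then gives $|\nabla h|\leq K:=(1+\cot^2\theta)^{1/2}M$.

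\emph{Second order.} Feeding $m,M,K$ into Theorem \ref{c2 estimate} yields $|\nabla^2h|\leq C$ on $C_\theta$. Together with the determinant bound $\det A\geq D_->0$ from the equation, the eigenvalues of $A=\nabla^2h+h\sigma$ satisfy
\[
\lambda_i\geq \frac{D_-}{\prod_{j\neq i}\lambda_j}\geq \frac{D_-}{C^{n-1}}.
\]
Hence
\[
C^{-1}\sigma\leq A\leq C\sigma
\]
on $C_\theta$. This makes $F(A)=\log\det A$ uniformly elliptic and concave along the solution.

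\emph{$C^{2,\alpha}$ estimate.} The right-hand side $\tilde f=(p-1)\log h+\frac{\beta}{2}\log P+\log f$ depends smoothly on $(\xi,h,\nabla h)$. It is bounded in $C^1$ in terms of the bounds already obtained, since $h>0$ and $P\geq m^2$. The boundary operator $h_\mu-\cot\theta\, h$ is linear and oblique; it is in fact a Neumann-type condition in the normal direction $\mu$ on the smooth boundary $\partial C_\theta$. I would apply the Lieberman–Trudinger $C^{2,\alpha}$ estimate for concave uniformly elliptic equations with oblique boundary conditions (Lieberman–Trudinger, or Lieberman's book, Thm.~on fully nonlinear oblique problems). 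This works in local coordinates on the cap, with the $C^{1,1}$ bound as input, and gives $\Vert h\Vert_{C^{2,\alpha}}\leq C$ for some $\alpha_0\in(0,1)$.

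\emph{Bootstrapping to $C^{3,\alpha}$.} Differentiate the equation in a tangential coordinate direction. The derivative $w=\partial h$ satisfies a linear uniformly elliptic equation $F^{ij}w_{ij}+\dots=\partial\tilde f$ with $C^{\alpha_0}$ coefficients, and $\partial$ of the Robin condition gives an oblique boundary condition with $C^{1,\alpha_0}$ data. Linear Schauder theory for oblique problems then gives $C^{2,\alpha_0}$ bounds on first derivatives, i.e.\ $\Vert h\Vert_{C^{3,\alpha_0}}\leq C$. Iterating yields any $\alpha\in(0,1)$, and indeed all higher norms.

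\emph{Main obstacle.} The main technical point is the $C^{2,\alpha}$ step at the boundary. There one has to check the hypotheses of the nonlinear oblique theory: concavity of $\log\det$ on positive definite matrices, uniform obliqueness (immediate, since $\beta\cdot\mu=1$), and the fact that the lower-order dependence of $\tilde f$ on $\nabla h$ is harmless once $C^{1,1}$ is known. I would cite the theorem rather than reprove it.
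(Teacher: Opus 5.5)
Your proposal is correct and takes the same route as the paper. The paper combines Lemma \ref{C0 estimate}, Lemma \ref{c1-1} and Theorem \ref{c2 estimate} to get $\min h\geq c$ and a $C^2$ bound, then appeals to the regularity theory for concave uniformly elliptic equations with oblique boundary conditions and to Schauder estimates. You spell out the same steps in more detail: uniform ellipticity from $\det A\geq D_->0$, the Lieberman--Trudinger $C^{2,\alpha}$ estimate, and the differentiation bootstrap. Two small points. Near $\partial C_\theta$, tangential differentiation only controls third derivatives with at least one tangential index, so the pure normal derivative $\partial_\mu^3 h$ must still be recovered from the differentiated equation using $F^{\mu\mu}\geq c>0$. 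Also, your $\beta$ for the boundary vector field clashes with the paper's $\beta=n+1-q$.
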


\begin{proof}
Combining Lemma \ref{C0 estimate}, Lemma \ref{c1-1} and Theorem \ref{c2 estimate}, we obtain
$$c\leq \min_{C_{\theta}}h, \qquad and \qquad \Vert h\Vert_{C^2(C_{\theta})}\leq C.$$
By the theory of fully nonlinear second-order uniformly elliptic equations and the Schauder estimate we can proof this corollary.

\end{proof}

\section{Proof of Theorem \ref{main1.1}} \label{se5}
We use the continuity method in logarithmic variables. For a positive function
$h$, set
\[
        v=\log h,\qquad
        B[v]_{ij}=v_{ij}+v_iv_j+\sigma_{ij}.
\]
Then
\[
        h_{ij}+h\sigma_{ij}=e^vB[v]_{ij},\qquad
        h^2+|\nabla h|^2=e^{2v}\bigl(1+|\nabla v|^2\bigr),
\]
and the Robin condition becomes
\[
        \nabla_{\mu}h=\cot\theta\,h
        \quad\Longleftrightarrow\quad
        \nabla_{\mu}v=\cot\theta.
\]

Define
\[
        f^{(0)}
        =l^{1-p}\left(l^2+|\nabla l|^2\right)^{\frac{q-n-1}{2}},
        \qquad
        f_t=(1-t)f^{(0)}+tf,\qquad 0\leq t\leq1.
\]
Consider the following family of equations
\[
\left\{
\begin{aligned}
&\det(\nabla^2h+h\sigma)
=h^{p-1}\left(h^2+|\nabla h|^2\right)^{\frac{n+1-q}{2}}f_t
&&\text{in }C_{\theta},\\
&\nabla_{\mu}h=\cot\theta\,h
&&\text{on }\partial C_{\theta}.
\end{aligned}
\right.
\]
Equivalently,
\begin{equation}\label{log path}
\left\{
\begin{aligned}
&\Phi_t[v]:=
\log\det B[v]
-\frac{n+1-q}{2}\log\bigl(1+|\nabla v|^2\bigr)
-(p-q)v-\log f_t=0
&&\text{in }C_{\theta},\\
&\nabla_{\mu}v=\cot\theta
&&\text{on }\partial C_{\theta}.
\end{aligned}
\right.
\end{equation}

\begin{proof}[Proof of Theorem~\ref{main1.1}]
Let $0<\alpha<1$ and put
\[
        \mathcal V_{\alpha}
        =\{v\in C^{2,\alpha}(C_{\theta}): B[v]>0\}.
\]
Define
\[
        \Psi(t,v)=\bigl(\Phi_t[v],v_{\mu}-\cot\theta\bigr)
\]
as a map from $[0,1]\times\mathcal V_{\alpha}$ to
$C^{0,\alpha}(C_{\theta})\times C^{1,\alpha}(\partial C_{\theta})$, and set
\[
        \mathcal I
        =\{t\in[0,1]:\text{ there is }v\in\mathcal V_{\alpha}
        \text{ with }\Psi(t,v)=0\}.
\]

The set $\mathcal I$ is nonempty. The identities for $l$ used in the proof of
Lemma \ref{C0 estimate},
\[
        \nabla^2l+l\sigma=\sigma,\qquad
        l_{\mu}=\cot\theta\,l,
\]
show that $h=l$ is strictly convex and satisfies the boundary condition. With
the above definition of $f^{(0)}$, it also satisfies the path equation at
$t=0$. Hence $0\in\mathcal I$.

We next prove openness. At an admissible zero $v$, the linearization in a
variation $\phi$ is
\begin{equation}\label{linearized log}
        D_v\Phi_t[v]\phi
        =B^{ij}\phi_{ij}
        +\left(2B^{ij}v_i
        -\frac{(n+1-q)v^j}{1+|\nabla v|^2}\right)\phi_j
        -(p-q)\phi,
\end{equation}
where $(B^{ij})$ is the inverse of $(B[v]_{ij})$; the same convention is used
for $B_a$ below. The boundary linearization is $\phi_{\mu}=0$.
Since $B[v]>0$ on the compact domain, this operator is uniformly elliptic, and the boundary operator is uniformly
oblique. The zeroth-order coefficient is $-(p-q)<0$. The maximum principle and
Hopf boundary lemma \cite[Lemma~3.4 and Theorem~3.5]{GT83} give zero kernel for
the homogeneous oblique problem. The oblique Schauder estimate
\cite[Theorem~6.30]{GT83}, together with the linear solvability theorem
\cite[Theorem~6.31]{GT83}, gives an isomorphism
\[
        \phi\mapsto (D_v\Phi_t[v]\phi,\phi_{\mu})
\]
from $C^{2,\alpha}(C_{\theta})$ to
$C^{0,\alpha}(C_{\theta})\times C^{1,\alpha}(\partial C_{\theta})$. The Banach
implicit-function theorem therefore shows that $\mathcal I$ is open.

Every admissible $C^{2,\alpha}$ solution is smooth up to the
boundary. Indeed, for a fixed such solution, the equation is
uniformly elliptic with $C^{0,\alpha}$ linearized coefficients,
and the boundary condition is linear and uniformly oblique.
The standard bootstrap using tangential difference quotients,
oblique Schauder estimates \cite[Section~6.7]{GT83}, and
recovery of normal derivatives from the equation yields
$v\in C^\infty(C_{\theta})$.

We prove closedness. Since $\theta\in(0,\frac{\pi}{2})$, the function $l$ is
bounded below by a positive constant. Thus $f^{(0)}$ is positive and smooth, and
for every $k$,
\[
        \inf_{C_{\theta}}f_t
        \geq \min\{\inf_{C_{\theta}}f^{(0)},\inf_{C_{\theta}}f\}>0,
        \qquad
        \Vert f_t\Vert_{C^k(C_{\theta})}
        \leq \Vert f^{(0)}\Vert_{C^k(C_{\theta})}
        +\Vert f\Vert_{C^k(C_{\theta})},
\]
uniformly in $t$. Let $t_j\in\mathcal I$ and $t_j\to t_{\infty}$. Choose
solutions $v_j$ and set $h_j=e^{v_j}$. By the preceding bootstrap, each $h_j$ is
smooth. Lemmas \ref{C0 estimate} and \ref{c1-1}, Theorem \ref{c2 estimate}, and
Corollary \ref{c2 theorem}, applied with $f_t$ in place of $f$, give uniform bounds
\[
        0<m\leq h_j\leq M,\qquad
        |\nabla h_j|\leq K,\qquad
        C^{-1}\sigma\leq A[h_j]\leq C\sigma,
        \qquad
        \Vert h_j\Vert_{C^{3,\gamma}(C_{\theta})}\leq C
\]
for some $\gamma\in(\alpha,1)$. After passing to a subsequence,
\[
        h_j\to h_{\infty}\qquad\text{in }C^{2,\alpha}(C_{\theta}).
\]
The limit is positive, the two-sided bound for $A[h_j]$ passes to the limit, and
therefore $A[h_{\infty}]>0$. Passing to the limit in the path equation and in
the boundary condition gives a solution at $t_{\infty}$. Thus
$v_{\infty}=\log h_{\infty}$ lies in $\mathcal V_{\alpha}$ and satisfies
$\Psi(t_{\infty},v_{\infty})=0$. Hence $t_{\infty}\in\mathcal I$, and
$\mathcal I$ is closed.

Since $[0,1]$ is connected, $\mathcal I=[0,1]$. At $t=1$ we obtain a smooth
positive strictly convex solution of Eq. \eqref{Eq}.

\enlargethispage{\baselineskip}
It remains to prove uniqueness. Let $v_0,v_1\in\mathcal V_{\alpha}$ be two
solutions for the same $t$, and set $w=v_1-v_0$. Then $w_{\mu}=0$ on
$\partial C_{\theta}$. Put $B_a=B[v_a]$ for $a=0,1$. Concavity of $\log\det$ on
the positive cone gives
\[
        \log\det B_1-\log\det B_0
        \leq B_0^{ij}(B_1-B_0)_{ij}.
\]
Since
\[
        (B_1-B_0)_{ij}=w_{ij}+v_{1,i}w_j+w_i v_{0,j},
\]
the first-order terms combine after contraction with the symmetric tensor
$B_0^{ij}$. The gradient logarithm has the exact line-segment identity
\[
\begin{split}
        &\log\bigl(1+|\nabla v_1|^2\bigr)
        -\log\bigl(1+|\nabla v_0|^2\bigr)\\
        &\qquad =
        2\int_0^1
        \frac{v_0^j+s w^j}{1+|\nabla v_0+s\nabla w|^2}\,ds\, w_j.
\end{split}
\]
Subtracting the two equations gives the linear inequality
\begin{equation}\label{comparison inequality}
\begin{split}
        0&\leq B_0^{ij}w_{ij}-(p-q)w\\
        &\quad+\left(B_0^{ij}(v_{1,i}+v_{0,i})
        -(n+1-q)\int_0^1
        \frac{v_0^j+s w^j}{1+|\nabla v_0+s\nabla w|^2}\,ds\right)w_j.
\end{split}
\end{equation}
The operator is uniformly elliptic because $B_0>0$ on the compact cap. A
positive interior maximum of $w$ contradicts the displayed inequality. A positive
boundary maximum contradicts the same inequality together with the Hopf boundary
lemma and the condition $w_{\mu}=0$. Thus $w\leq0$. Reversing the roles of
$v_0$ and $v_1$ gives $w\geq0$, hence $v_0\equiv v_1$ and uniqueness follows.
\end{proof}

%\section*{Acknowledgments}
% Acknowledgments and funding information will be supplied later.

\end{document}